  \newfont\fiverm{cmr5} 
\newtheorem{thm}{Theorem}[section]
\newtheorem{lem}[thm]{Lemma}
\newtheorem{cor}[thm]{Corollary}
\newtheorem{conj}[thm]{Conjecture}
\newtheorem{rmk}[thm]{Remark}
\newtheorem{thm-con}[thm]{Theorem-Conjecture}
\numberwithin{equation}{section}
\theoremstyle{definition}
\newcommand{\f}{\Bbb F}
\newcommand{\fin}{F_{\text{\rm inv}}}
\newcommand{\ba}{\boldsymbol a}
\newcommand{\x}{\boldsymbol X}
\newcommand{\fbar}{\overline{\f}}
\begin{document}

\title{More on the sum-freedom of the multiplicative inverse function}

\author[Claude Carlet]{Claude Carlet}
\address{Department of Mathematics,
University Paris 8, 93526 Saint-Denis France and Department of Informatics, University of Bergen, 5005 Bergen Norway}
\email{claude.carlet@gmail.com}
\thanks{* Claude Carlet was partially supported by the Norwegian Research Council.}
\author[Xiang-dong Hou]{Xiang-dong Hou}
\address{Department of Mathematics and Statistics,
University of South Florida, Tampa, FL 33620}
\email{xhou@usf.edu}

\keywords{APN function, finite field, Lang-Weil bound,  multiplicative inverse function, sum-free function,
}

\subjclass[2020]{11G25, 11T06, 11T71, 94D10}

\begin{abstract}
In two papers entitled ``Two generalizations of almost perfect nonlinearity" and ``On the vector subspaces of $\mathbb F_{2^n}$ over which the multiplicative inverse function sums to zero", the first author has introduced and studied the notion of sum-freedom of vectorial functions, which expresses that a function sums to nonzero values over all affine subspaces of $\f_{2^n}$ of a given dimension $k\geq 2$, and he then focused on the $k$th order sum-freedom of the multiplicative inverse function $x\in \f_{2^n}\mapsto x^{2^n-2}$. Some general results were given for this function (in particular, the case of affine spaces that do not contain 0 was solved positively), and the cases of $k\in \{3,4,n-4,n-3\}$ and of $k$ not co-prime with $n$ were solved as well (negatively); but the cases of those linear subspaces of dimension $k\in \llbracket 5;n-5\rrbracket$, co-prime with $n$, were left open. The present paper is a continuation of the previous work. After studying, from two different angles, the particular case of those linear subspaces that are stable under the Frobenius automorphism, we deduce from the second approach that, for $k$ small enough (approximately, $3\le k\leq 3n/13$), the multiplicative inverse function is not $k$th order sum-free. Finally, we deduce from results previously obtained in the second paper mentioned above, that for any even $n$ and every $2\leq k\leq n-2$, the multiplicative inverse function is not $k$th order sum-free.
\end{abstract}

\maketitle

\section{A Brief Introduction}
Let $n$ and $k$ be positive integers such that $1\le k\le n-1$. A (so-called vectorial)  function $F:\f_{2^n}\to\f_{2^n}$ is said to be {\em $k$th order sum-free} if (see \cite{Carlet-0}), for every $k$-dimensional affine subspace $A$ of $\f_{2^n}$, we have:
\begin{equation}\label{1.1} 
\sum_{x\in A}F(x)\ne 0.
\end{equation}This notion plays a role in cryptography and presents also an interest from a geometric viewpoint. For affine geometry over finite fields and in general, see \cite[Chapter 3]{AK1} and \cite[Section 2.2]{AK2}.

Since Condition \eqref{1.1} simply corresponds for $k=1$ to the bijectivity of $F$, we shall assume $k\geq 2$. For $k=2$, it corresponds to almost perfect nonlinearity \cite{CC-Nyberg-ter} and we are interested in $k\geq 3$.

Let $\fin:\f_{2^n}\to\f_{2^n}$ be the so-called multiplicative inverse function, defined by
\[
\fin(x)=x^{2^n-2}=\begin{cases}
x^{-1}&\text{if}\ x\ne 0,\cr
0&\text{if}\ x=0.
\end{cases}
\]We continue in the present paper the work made in \cite{Carlet-pre} on the $k$th order sum-freedom of this function, which  is important for the study of finite fields, and has applications in cryptography (it is used as a substitution box in many block ciphers, in particular the currently most important one: the Advanced Encryption Standard). We know from Nyberg \cite{CC-Nyberg-ter} that $\fin$ is second order sum-free if and only if $n$ is odd.
It is also known from \cite{Carlet-0} that if $A$ is an affine subspace of $\f_{2^n}$ not containing $0$, then $\sum_{x\in A}\fin(x)\ne 0$, because 
\[
\sum_{u\in A}\frac 1u=\frac{\prod_{0\ne u\in E}u}{\prod_{u\in A}u},
\]
where $E$ is the linear subspace of $\f_{2^n}$ such that $A$ is a coset of $E$. Therefore, $\fin$ is not $k$th order sum-free if and only if there is a $k$-dimensional linear subspace $E$ of $\f_{2^n}$ such that
\begin{equation}\label{1.2}
\sum_{0\ne x\in E}\frac 1x=0.
\end{equation}

\medskip
Let us summarize the values of $k\geq 3$ for which the $k$th order sum-freedom of $\fin$ could be deduced in \cite{Carlet-0,Carlet-pre} (in some cases from more general results):

\begin{itemize}

\item If $\text{gcd}(k,n)>1$, $\fin$ is not $k$th order sum-free,

\item $\fin$ is $k$th order sum-free if and only if it is $(n-k)$th order sum-free,

\item If $\fin$ is neither $l$th order sum-free nor $r$th order sum-free, and if $lr<n$, then $\fin$ is not $(l+r)$th order sum-free,

\item If $6\mid n$, then $\fin$ is not $k$th order sum-free for $2\le k\le n-2$,

\item For $n\ge 6$, $\fin$ is neither $3$rd order sum-free, nor $4$th order sum-free.

\end{itemize}
The above results and computer investigations suggest that the sum-freedom of $\fin$ follows a simple pattern as stated in the following conjecture.

\begin{conj}\label{Conj1.1}\cite{Carlet-pre}
For even $n$, $\fin$ is not $k$th order sum-free for $2\le k\le n-2$. For odd $n$, $\fin$ is not $k$th order sum-free for $3\le k\le n-3$.
\end{conj}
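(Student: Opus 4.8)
The plan is to establish Conjecture~\ref{Conj1.1} along two fronts, using throughout the reformulation recalled in the introduction: $\fin$ fails to be $k$th order sum-free precisely when some $k$-dimensional linear subspace $E\subseteq\f_{2^n}$ satisfies $\sum_{0\ne x\in E}x^{-1}=0$. Front~1 treats $n$ even, aiming for the whole range $2\le k\le n-2$ by explicit constructions; Front~2 treats arbitrary $n$ with $k$ small, via a family of subspaces adapted to the Frobenius map and the Lang--Weil bound.

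On Front~1, I would first dispose of all \emph{even} $k$ at once: if $E$ is any $\f_4$-subspace of $\f_{2^n}$ --- one exists of $\f_2$-dimension $k$ for each even $k$ with $2\le k\le n-2$ --- then $E\setminus\{0\}$ splits into multiplicative cosets of $\f_4^\ast$, and over each coset $\{\mu y:\mu\in\f_4^\ast\}$ one has $\sum_\mu(\mu y)^{-1}=y^{-1}\sum_{\mu\in\f_4^\ast}\mu=0$, so $\sum_{0\ne x\in E}x^{-1}=0$. For \emph{odd} $k$ I would run a strong induction on $n$ built from three devices. (i) An Artin--Schreier reduction: writing $n=2m$ and $\f_{2^{2m}}=\f_{2^m}\oplus\f_{2^m}\alpha$ with $\alpha^{2^m}=\alpha+1$, the subspace $E=\f_{2^m}\oplus\{u\alpha:u\in U\}$, where $U\subseteq\f_{2^m}$ is an $\f_2$-subspace of dimension $j$, satisfies --- after a short computation using $\sum_{0\ne c\in\f_{2^m}}c^{-1}=0$, $\prod_{0\ne c\in\f_{2^m}}c=1$, and $(u\alpha)^{2^m}+u\alpha=u$ --- the identity $\sum_{0\ne x\in E}x^{-1}=\sum_{0\ne u\in U}u^{-1}$; hence $\fin$ is not $(m+j)$th order sum-free over $\f_{2^{2m}}$ whenever it is not $j$th order sum-free over $\f_{2^m}$. (ii) The duality $k\leftrightarrow n-k$, which converts the range $k\ge m$ into the range $k\le m$. (iii) The already-known facts that $\fin$ is not $3$rd order sum-free for $n\ge6$ and not $4$th order sum-free for even $n\ge6$, serving as base cases, together with a lifting lemma sending a $k$-dimensional example $E_0$ to a $(k+2)$-dimensional one of the form $E_0\oplus\f_4v$: writing $L_{E_0}(T)=\prod_{e\in E_0}(T+e)=\sum_i a_iT^{2^i}$ and splitting it into its even-index and odd-index parts $A(T),B(T)$, the extra contribution works out to $\bigl(\prod_{0\ne e\in E_0}e\bigr)\,A(v)B(v)/(A(v)^3+B(v)^3)$, so it suffices to choose $v\notin E_0\cup\omega E_0\cup\omega^2E_0$ with $A(v)=0$ or $B(v)=0$ --- possible by a dimension count once $n$ is large relative to $k$, the finitely many small cases being settled by direct search. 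Extending the second paper's result to make all of this fit together, and in particular organising the cases by $n\bmod4$, is the bookkeeping-heavy part of Front~1.

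On Front~2, I would exploit that $x\mapsto x^2$ constrains things rigidly: if $E$ is Frobenius-stable then $S:=\sum_{0\ne x\in E}x^{-1}$ satisfies $S^2=\sum_{0\ne x\in E}x^{-2}=S$, so $S\in\{0,1\}$ and the question is only which value occurs. To actually force the value $0$ for small $k$, I would relax stability and work with the one-parameter family of Frobenius-orbit subspaces $V_v=\langle v,v^2,\dots,v^{2^{k-1}}\rangle$ (for $v$ outside the small locus where $\dim V_v<k$), possibly with a few additional free parameters; clearing denominators turns $\sum_{0\ne x\in V_v}x^{-1}=0$ into a polynomial equation $\mathcal N=0$ over $\f_2$ of degree on the order of $4^k$. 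The plan is then to prove that the hypersurface $\{\mathcal N=0\}$ carries an absolutely irreducible component defined over $\f_2$ of the expected dimension, and to apply Lang--Weil: its main term beats both the degenerate locus and the error term --- the ratio of main to error being of order $2^{n/2}$ divided by a fixed power of $4^k$ --- exactly when $2^n$ exceeds a fixed power of $4^k$, which gives a usable range of roughly $k\le n/10$.

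The step I expect to be the real obstacle is closing the gap: for $n$ odd and $k$ in the middle band, roughly $n/10\lesssim k\lesssim n/2$ (and, dually, $n/2\lesssim k\lesssim n-3$), Lang--Weil is powerless because $\deg\mathcal N\sim4^k$ overwhelms $2^{n/2}$, while for such $k$ --- odd and coprime to $n$ in the hardest cases --- neither the $\f_4$-subspace construction nor the Artin--Schreier reduction produces any example. A full proof of Conjecture~\ref{Conj1.1} seems to require either a much finer analysis of the varieties $\{\mathcal N=0\}$, using their very special combinatorial structure to count rational points well outside the generic Lang--Weil regime, or a genuinely new and flexible construction of subspaces on which the inverse function sums to zero. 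With the present plan the conjecture comes out for all even $n$, and for odd $n$ only in the small-$k$ range, leaving that middle band open.
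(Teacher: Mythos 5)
The statement you were given is a conjecture that the paper itself does not fully resolve, and the scope you arrive at is exactly the paper's: the conjecture is proved for all even $n$ (Theorem~\ref{T5.1}) and, for every $n$, for $k\ge 3$ small enough that $n\ge 10.8k-5$ (Theorem~\ref{T3.4}, plus its dual range), with the middle band for odd $n$ left open. Your architecture also matches the paper's (for even $n$: a base case $k=3$, a ``$+2$'' step, and $k\leftrightarrow n-k$ duality; for small $k$: absolute irreducibility plus Lang--Weil). However, the two key steps have genuine gaps as written. For the lifting lemma, your formula for the extra contribution of $E_0\oplus\f_4v$, namely $\bigl(\prod_{0\ne e\in E_0}e\bigr)A(v)B(v)/(A(v)^3+B(v)^3)$, is correct, but the claim that $A(v)=0$ or $B(v)=0$ can be arranged ``by a dimension count once $n$ is large relative to $k$'' does not hold: $A$ and $B$ are $2$-polynomials of degree at most $2^k$, so their zero sets in $\f_{2^n}$ are $\f_2$-subspaces of dimension at most $k$ \emph{independently of $n$}, and nothing prevents them from being trivial or from lying inside $E_0\cup\omega E_0\cup\omega^2 E_0$; enlarging $n$ creates no new roots. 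The paper's Corollary~\ref{C5.1} obtains the $+2$ step by a mechanism that genuinely exploits $r<n/2$: the $\f_4$-span of the given $r$-dimensional example $F$ is a proper $\f_4$-subspace, so some $a\ne 0$ satisfies $\mathrm{Tr}_{2^n/2^2}(aF)=0$, hence $aF$ lies in the image of $X^{2^2}+X$, and one takes $\f_4\oplus E'$ with $L_{\f_4}(E')=aF$ and applies the composition identity of \cite[Corollary 5]{Carlet-pre}. You would need to replace your choice-of-$v$ argument by something of this kind.

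The Lang--Weil front has a more basic problem. With the one-parameter family $V_v=\langle v,v^2,\dots,v^{2^{k-1}}\rangle$, the condition $\sum_{0\ne x\in V_v}x^{-1}=0$ is a single polynomial equation in the single variable $v$, of degree bounded in terms of $k$ alone; there is no positive-dimensional variety, hence no main term $q^{\dim}$ for Lang--Weil to exploit. Whether that one-variable equation has a solution in $\f_{2^n}$ with the nondegeneracy condition is precisely the question of whether $X^n-1$ has a factor $X^k+a_{k-1}X^{k-1}+\cdots+a_2X^2+a_0$ over $\f_2$ (Theorem~\ref{T2.1}), and Section~2.2 of the paper shows that for each $k$ this fails for infinitely many $n$. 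The repair is your parenthetical ``additional free parameters'' pushed to its conclusion: one must use all $k$ parameters, i.e., the polynomial $F_k=\Delta_1/\Delta\in\f_2[X_1,\dots,X_k]$ of degree $2^k-2$, prove it absolutely irreducible for $k\ge 3$ (Lemma~\ref{T3.1}, via Eisenstein's criterion applied to the recursion $C_{k-1}=F_{k-1}(X_2,\dots,X_k)$, $C_{-1}=\Delta(X_2,\dots,X_k)^2$), and then compare the Lang--Weil lower bound for the hypersurface $F_k=0$ in affine $k$-space with the upper bound for its intersection with $\Delta=0$; this is what yields the threshold $n\ge 10.8k-5$. Your Artin--Schreier reduction and the observation that $S\in\{0,1\}$ for Frobenius-stable subspaces are both correct and pleasant, but neither closes either gap.
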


The conjecture has been confirmed for $n\le 12$ \cite{Carlet-pre} and for $6\mid n$ (as indicated above). In the present paper, we prove several new results concerning this conjecture. We find that, if $X^n+1$ has a factor of the form $X^k+a_{k-1}X^{k-1}+\cdots+a_2X^2+a_0\in\f_2[X]$, then  $\fin$ is not $k$th order sum-free. Using the Lang-Weil bound on the number of zeros of absolute irreducible polynomials over finite fields, we prove that $\fin$ is not $k$th order sum-free when $k\ge 3$ and $n\ge 13k/3+3$. We are also able to deduce that Conjecture~\ref{Conj1.1} holds for all even $n$ from a result in \cite{Carlet-pre}.

The rest of the paper is organized as follows: In Section~2, we study the companion matrix of the subspace polynomial of a $k$-dimensional subspace of $\f_{2^n}$. This leads to the conclusion that, if $X^n+1$ has a factor of the form $X^k+a_{k-1}X^{k-1}+\cdots+a_2X^2+a_0\in\f_2[X]$, then $\fin$ is not $k$th order sum-free. We then describe the values of $k$ so that such a factor exists. Section~3 provides an alternative approach to Section~2 based on the Moore determinant. Not only does this new approach lead us to the same results of Section~2, but also it prepares us for the discussion in the next section. In Section~4, using the Lang-Weil bound, we show that when $k\ge 3$ and $n\ge 13k/3+3$, $\fin$ is not $k$th order sum-free. The final section contains a short proof for Conjecture~\ref{Conj1.1} with even $n$.


\section{Case of affine subspaces (globally) invariant under the Frobenius automorphism}\label{5.3.4} 

\subsection{A companion matrix approach}\label{spp}
We know (see e.g. \cite{CMPZ,McGS}) that a linearized polynomial $\sum_{i=0}^k b_iX^{2^{i}}\in \mathbb F_{2^n}[X]$ with $b_k=1$ has $2^k$ distinct zeros in $\mathbb{F}_{2^n}$, that is, equals a so-called subspace polynomial $L_{E_k}(X):=\prod_{u\in E_k}(X+u)$, for some $k$-dimensional linear subspace $E_k$ of $\mathbb F_{2^n}$, if and only if the so-called companion matrix 
\begin{equation}\label{defA}A= 
\left[\begin{array}{cccccc}
0&0 & 0 & \ldots & 0&b_{0}\\
 1 &0& 0 & \ldots &0& b_{1}\\
0&  1 & 0 & \ldots &0& b_{2}\\
 \vdots& \vdots&\vdots& \vdots& \vdots& \vdots\\
 0&  0 & 0 & \ldots &1& b_{k-1}\\
 \end{array}\right].
\end{equation} 
satisfies $AA^{[2]}\cdots A^{[2^{n-1}]}=I_k$, where $A^{[2^i]}$ is the matrix obtained from $A$ by applying to each of its entries
the automorphism $x\mapsto x^{2^i}$, and where $I_k$ is the $k\times k$ identity matrix. It is proved in \cite{Carlet-0,Carlet-pre} that we have $\sum_{x\in E_k}\fin(x)=0$ if and only if $b_1=0$. We are then looking at whether such matrices $A$ exist with $b_{1}=0$. \\

\noindent {\bf Remark}. Necessarily, we have $b_{0}=\prod_{u\in E_k, u\neq 0}u\neq 0$. If $\gcd(k,n)=1$, then $2^k-1$ is invertible modulo $2^n-1$, and we can assume without loss of generality that $b_{0}=1$, because by dividing each element $u$ of $E_k$ by $ b_{0}^{1/(2^k-1)}$, we change $b_{0}$ into 1. 

\medskip

The polynomial $L_{E_k}(X)$ has all its coefficients $b_i$ in $\mathbb F_2$ if and only if $L_{E_k}(X^2)=(L_{E_k}(X))^2$, that is, $E_k$ is stable under the Frobenius automorphism. Then, the condition $AA^{[2]}\cdots A^{[2^{n-1}]}=I_k$ becomes $A^n=I_k$.\\

\noindent {\bf Remark}. If $k$ divides $n$, the linearized polynomial $X^{2^k}+X$ satisfies the condition $A^n=I_k$ (as expected since $X^{2^k}+X$ is the subspace polynomial corresponding to the vector space $\mathbb F_{2^k}$), because $A$ is then the matrix of the shift over $\mathbb{F}_{2^n}^k$ and it satisfies $A^n=I_k$, thanks to the fact that $k$ divides $n$. This is also coherent with the fact that $\fin$ sums to 0 over $\mathbb{F}_{2^k}$ since it is a permutation of this field. 

\medskip

For general $k$ and $n$, recall that the Cayley-Hamilton theorem states that the characteristic equation of $A$, $\det(\lambda I-A)=0$ (where $\det$ is the determinant operation and $\lambda$ is a scalar variable), is satisfied when we replace $\lambda$ by the matrix $A$ itself (obtaining then a matrix relation), and it writes $A^k=\sum_{i=0}^{k-1}b_{i}A^i$ (see e.g. \cite[pages 146-147]{HJ} or \url{https://en.wikipedia.org/wiki/Companion_matrix}). 
The order of the matrix $A$ equals the order of the polynomial $P(X)=\sum_{i=0}^k b_{i}X^i$ (see e.g. \cite{dar}). 
We then have 
\begin{thm}\label{pp9} 
If $X^n+1$ has a factor $X^k+a_{k-1}X^{k-1}+\dots +a_2X^2+a_0 \in \mathbb F_2[X]$,
then $\fin $ is not $k$th order sum-free.
\end{thm}

\begin{proof}
Let $A$ be the matrix in \eqref{defA} with 
\[
(b_0,b_1,b_2,\dots,b_{k-1},b_k)=(a_0,0,a_2,\dots,a_{k-1},1),
\]
and let $f(X)=\sum_{i=0}^kb_iX^i$. By Cayley-Hamilton, $f(A)=0$. Since $f(X)\mid X^n+1$, we have $A^n=I_k$. Therefore $f(X)=L_{E_k}(X)$ for some $k$-dimensional subspace of $\f_{2^n}$. Since $b_1=0$, we have $\sum_{x\in E_k}\fin(x)=0$.
\end{proof}


Note that the coefficient of $X^2$ in $\sum_{i=0}^k b_iX^{2^{i}}$  equals that of $X$ in $\sum_{i=0}^k b_{i}X^i$. From Theorem~\ref{pp9}, we deduce the following corollary which gives for each (composite) $n$ a set of values of $k$ for which $\fin$ is not $k$th order sum-free.

\begin{cor}\label{c2}
Let $\prod_{i=1}^l p_i^{\alpha_i}$ be the prime factorization of $n$. For every choice of the binary hyper-matrix 
\[
\epsilon=(\epsilon_{j_1,\dots ,j_l})_{ (j_1,\dots ,j_l)\in \prod_{i=1}^l\{0,\dots ,\alpha_i\}},
\]
(with $\epsilon_{j_1,\dots ,j_l}\in \{0,1\}\subset \mathbb Z$) such that the integer \begin{equation}\label{sumepsilon}
\sum_{(j_1,\dots ,j_l)\in \{0,1\}^l}\epsilon_{j_1,\dots ,j_l}
\end{equation} 
is even (possibly 0), the multiplicative inverse function $\fin$ over $\mathbb F_{2^n}$ is not $k$th order sum-free with 
\[
k=\sum_{(j_1,\dots ,j_l)\in \prod_{i=1}^l\{0,\dots ,\alpha_i\}}\epsilon_{j_1,\dots ,j_l}\, \prod_{i\in \{1,\dots ,l\}\atop j_i\geq1}(p_i-1)\, p_i^{j_i-1}.
\]
\end{cor}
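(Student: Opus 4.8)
The plan is to reduce everything to Theorem~\ref{pp9}: it suffices to exhibit, for the value of $k$ in the statement, a monic factor of $X^n+1$ in $\f_2[X]$ in which the coefficient of $X$ is zero. I would start from the cyclotomic factorization
\[
X^n+1=\prod_{d\mid n}\Phi_d(X)\qquad\text{in }\f_2[X],
\]
where $\Phi_d$ is the $d$th cyclotomic polynomial reduced modulo $2$; recall that $\Phi_d$ is monic of degree $\varphi(d)$ and that its constant term is $1$ in $\f_2$ (it is $\pm1$ over $\mathbb Z$, and in any case nonzero since $\Phi_d\mid X^n+1$). The divisors $d$ of $n$ correspond bijectively to the tuples $(j_1,\dots,j_l)\in\prod_{i=1}^l\{0,\dots,\alpha_i\}$ via $d=\prod_{i=1}^l p_i^{j_i}$, and multiplicativity of Euler's function gives $\varphi\left(\prod_i p_i^{j_i}\right)=\prod_{i:\,j_i\ge1}(p_i-1)p_i^{j_i-1}$. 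Thus the hyper-matrix $\epsilon$ selects the set of divisors $S=\{\,d_{(j_1,\dots,j_l)}:\epsilon_{j_1,\dots,j_l}=1\,\}$, and $P(X):=\prod_{d\in S}\Phi_d(X)$ is a monic factor of $X^n+1$ whose degree is exactly the $k$ of the statement (distinct tuples give distinct divisors, so no term of the sum collapses).

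It then remains to check that the coefficient of $X$ in $P$ vanishes. Since every $\Phi_d$ has constant term $1$ in $\f_2$, the coefficient of $X$ in $\prod_{d\in S}\Phi_d$ equals $\sum_{d\in S}(\text{coefficient of }X\text{ in }\Phi_d)$ in $\f_2$. The one genuinely arithmetic step is the identity: in $\f_2$, the coefficient of $X$ in $\Phi_d$ is $1$ if $d$ is squarefree and $0$ otherwise. Over $\mathbb Z$ this holds because, for $d\ge2$, $\Phi_d$ is self-reciprocal (its roots come in pairs $\zeta,\zeta^{-1}$), so the coefficient of $X$ equals the coefficient of $X^{\varphi(d)-1}$, which is minus the sum of the primitive $d$th roots of unity, i.e. $-\mu(d)$; for $d=1$ the coefficient of $X$ in $\Phi_1=X-1$ is $1=\mu(1)$. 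Reducing modulo $2$, the coefficient of $X$ in $\Phi_d$ is congruent to $\mu(d)$, hence equals $1$ in $\f_2$ exactly when $d$ is squarefree.

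To conclude, I would observe that $d_{(j_1,\dots,j_l)}=\prod_i p_i^{j_i}$ is squarefree if and only if every $j_i\le1$, i.e. $(j_1,\dots,j_l)\in\{0,1\}^l$. Therefore the number of squarefree elements of $S$ is $\#\{(j_1,\dots,j_l)\in\{0,1\}^l:\epsilon_{j_1,\dots,j_l}=1\}=\sum_{(j_1,\dots,j_l)\in\{0,1\}^l}\epsilon_{j_1,\dots,j_l}$, which is even by hypothesis. Hence the coefficient of $X$ in $P$ is $0$ in $\f_2$, while the constant term of $P$ is $1$; so $P(X)=X^k+a_{k-1}X^{k-1}+\cdots+a_2X^2+a_0$ with $a_0\ne0$ is a factor of $X^n+1$ of exactly the shape required by Theorem~\ref{pp9}, and $\fin$ is not $k$th order sum-free. (When $\epsilon$ is not identically zero, the evenness hypothesis forces $k\ge2$, so the statement is non-vacuous; the case $\epsilon\equiv0$ gives $k=0$ and is empty.)

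The main obstacle is the lemma on the parity of the linear coefficient of $\Phi_d$; once it is available, the rest is bookkeeping. The points needing care are that reduction modulo $2$ preserves the degree and the constant and linear coefficients of $\Phi_d$ (they are determined by the integer polynomial), and that distinct tuples $(j_1,\dots,j_l)$ yield distinct divisors of $n$, so that $\deg P$ is exactly $k$ and the count of squarefree divisors in $S$ is exactly the sum appearing in the hypothesis.
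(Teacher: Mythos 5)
Your proposal is correct and follows essentially the same route as the paper: both form the product $P=\prod_{d\in D_\epsilon}\Phi_d$ of cyclotomic polynomials over the divisors selected by $\epsilon$, reduce the vanishing of the linear coefficient of $P$ to the parity of the number of squarefree indices in $D_\epsilon$, and conclude via Theorem~\ref{pp9}. The only difference is in how the sub-lemma ``the coefficient of $X$ in $\Phi_d$ is $\mu(d)$ modulo $2$'' is established: you use self-reciprocity of $\Phi_d$ together with the fact that the sum of the primitive $d$th roots of unity is $\mu(d)$, whereas the paper derives it from the recursion $\Phi_{p_i^jr}(X)=\Phi_r(X^{p_i^j})/\Phi_r(X^{p_i^{j-1}})$ by evaluating a derivative at $0$; your version is arguably cleaner and equally rigorous.
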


\begin{proof} 
For each value of $\epsilon$, let us consider the set that we shall denote by $D_\epsilon$, of those (distinct) divisors of $n$ equal to $\prod_{i=1}^lp_i^{j_i}$, where $\epsilon_{j_1,\dots ,j_l}=1$. We consider then the cyclotomic polynomials  (see e.g. \cite{lang}) whose indices equal these divisors. Recall that the cyclotomic polynomial of index 1 equals $X-1$ (that is, $X+1$ in characteristic 2), and the cyclotomic polynomial of index $p_i$ equals $\Phi_{p_i}(X)=(X^{p_i}-1)/(X-1)=1+X+X^2+\dots +X^{p_i-1}$. For $j\geq 1$, the cyclotomic polynomial of index $p_i^j$ equals $\Phi_{p_i^j}(X)=\Phi_{p_i}(X^{p_i^{j-1}})$ and has then degree $(p_i-1)\, p_i^{j-1}$. The cyclotomic polynomials of indices all the divisors of $n$ are obtained by iteratively applying the formula  $\Phi_{p_i^jr}(X)=\Phi_{p_ir}(X^{p_i^{j-1}})=\Phi_{r}(X^{p_i^j})/\Phi_{r}(X^{p_i^{j-1}})$, which is valid when $j\geq 1$ and $\gcd(p_i,r)=1$. The degree of $\Phi_{p_i^jr}(X)$ equals then  $(p_i^j-p_i^{j-1})\deg(\Phi_r)=(p_i-1)p_i^{j-1}\deg(\Phi_r)$.  The coefficient of $X$ in $\Phi_{p_i^jr}(X)$ equals the value at 0, for $j\geq 1$, of the polynomial derivative of the fraction $\Phi_{r}(X^{p_i^j})/\Phi_{r}(X^{p_i^{j-1}})$, that is, 
\[
\frac{\Phi_{r}(0)\Phi'_{r}(0)p_i^{j-1}0^{p_i^{j-1}-1}}{(\Phi_{r}(0))^2}, 
\]
which equals $\Phi'_{r}(0)$ if $j=1$ and 0 otherwise. Then, we have $\Phi_{p_i^jr}(X)\equiv 1+X \pmod{X^2}$ if $\Phi_r(X)\equiv 1+X \pmod{X^2}$ and $j\in \{0,1\}$, and $\Phi_{p_i^jr}(X)\equiv 1\pmod{X^2}$ otherwise. Hence, we have $\Phi_{p_1^{s_1}\cdots p_l^{s_l}}(X)\equiv 1+X \pmod{X^2}$ if $s_i\in \{0,1\}$ for all $i\in \{1,\dots ,l\}$, and $\Phi_{p_1^{s_1}\cdots p_l^{s_l}}(X)\equiv 1\pmod{X^2}$ otherwise. We have $X^n+1=\prod_{d\mid n}\Phi_d(X)$. Then, 
 the product $\prod_{d\in D_\epsilon}\Phi_d$ being a product of cyclotomic polynomials $\Phi_d$ such that all $d\in D_\epsilon$ are distinct, it divides $X^n+1$. We complete the proof by combining Theorem \ref{pp9} and the fact that $\sum_{u\in E,u\neq 0}1/u$ is equal to 0 if and only if the coefficient of $X^2$ in the linearized polynomial $L_E(X)=\prod_{u\in E}(X+u)$ equals 0. (Note that the condition that (\ref{sumepsilon}) is even means that the coefficient of $X$ in $\prod_{d\in D_\epsilon}(X)$ equals 0.)
\end{proof}   

Note that 
\[
\sum_{(j_1,\dots ,j_l)\in \prod_{i=1}^l\{0,\dots ,\alpha_i\}}\epsilon_{j_1,\dots ,j_l}\, \prod_{i\in \{1,\dots ,l\}\atop j_i\geq1}(p_i-1)\, p_i^{j_i-1}
\]
may be co-prime with $n$, and this corollary covers values of $k$ that are not covered by \cite{Carlet-0,Carlet-pre}. It covers in fact many values of $k$ for each $n$ (which needs to be composite, though), all the more when it has many prime divisors at large powers.\\

\noindent {\bf Example}. Take $n=12$. We have $p_1=2, p_2=3$, $\alpha_1=2, \alpha_2=1$. Since $l=2$, each $\epsilon$ is a matrix, and we shall choose $j_1$ as row-index (which makes three rows) and $j_2$ as column-index (which makes two columns). The values of the matrix $\epsilon$ satisfying the condition in Corollary \ref{c2} 
are displayed below, as the first term of each triple. The corresponding set $D_\epsilon$ of distinct divisors of $n$ (that we list in the order obtained by visiting each position equal to 1 in the first column, and then in the second column) is the second term, and the corresponding value of $k$ is the third term.\\ $$(\epsilon,D_\epsilon,k)=$$$$\left(\left[\begin{array}{l}10\\10\\00\end{array}\right],\{1,2\},2\right),\left(\left[\begin{array}{l}11\\00\\00\end{array}\right],\{1,3\},3\right),\left(\left[\begin{array}{l}10\\01\\00\end{array}\right],\{1,6\},3\right),$$$$\left(\left[\begin{array}{l}01\\10\\00\end{array}\right],\{2,3\},3\right),\left(\left[\begin{array}{l}01\\01\\00\end{array}\right],\{3,6\},4\right),\left(\left[\begin{array}{l}00\\11\\00\end{array}\right],\{2,6\},3\right),$$$$\left(\left[\begin{array}{l}01\\10\\10\end{array}\right],\{2,4,3\},5\right),\left(\left[\begin{array}{l}01\\01\\10\end{array}\right],\{4,3,6\},6\right),\left(\left[\begin{array}{l}00\\11\\10\end{array}\right],\{2,4,6\},5\right),$$$$\left(\left[\begin{array}{l}10\\10\\10\end{array}\right],\{1,2,4\},4\right),\left(\left[\begin{array}{l}11\\00\\10\end{array}\right],\{1,4,3\},5\right),\left(\left[\begin{array}{l}10\\01\\10\end{array}\right],\{1,4,6\},5\right),$$$$\left(\left[\begin{array}{l}01\\10\\01\end{array}\right],\{2,3,12\},7\right),\left(\left[\begin{array}{l}01\\01\\01\end{array}\right],\{3,6,12\},8\right),\left(\left[\begin{array}{l}00\\11\\01\end{array}\right],\{2,6,12\},7\right),$$$$\left(\left[\begin{array}{l}10\\10\\01\end{array}\right],\{1,2,12\},6\right),\left(\left[\begin{array}{l}11\\00\\01\end{array}\right],\{1,3,12\},7\right),\left(\left[\begin{array}{l}10\\01\\01\end{array}\right],\{1,6,12\},7\right),$$$$\left(\left[\!\begin{array}{l}01\\10\\11\end{array}\!\right],\{2,4,3,12\},9\right),\left(\left[\!\begin{array}{l}01\\01\\11\end{array}\!\right],\{4,3,6,12\},10\right),\left(\left[\!\begin{array}{l}00\\11\\11\end{array}\!\right],\{2,4,6,12\},9\right),$$$$\left(\left[\begin{array}{l}10\\10\\11\end{array}\right],\{1,2,4,12\},8\right),\left(\left[\begin{array}{l}11\\00\\11\end{array}\right],\{1,4,3,12\},9\right),\left(\left[\begin{array}{l}10\\01\\11\end{array}\right],\{1,4,6,12\},9\right).$$
Hence, Corollary \ref{c2} implies that the multiplicative inverse function over $\mathbb F_{2^{12}}$ is not $k$th order sum-free where $k\in \{2,3,4,5,6,7,8,9,10\}$. 

\bigskip

\noindent {\bf Other examples}.\\- For $n=6$, we have $p_1=2, p_2=3$, $\alpha_1=1, \alpha_2=1$, and
$$(\epsilon,D_\epsilon,k)=$$$$\left(\left[\begin{array}{l}01\\10\end{array}\right],\{2,3\},3\right),\left(\left[\begin{array}{l}01\\01\end{array}\right],\{3,6\},4\right),\left(\left[\begin{array}{l}00\\11\end{array}\right],\{2,6\},3\right),$$$$\left(\left[\begin{array}{l}10\\10\end{array}\right],\{1,2\},2\right),\left(\left[\begin{array}{l}11\\00\end{array}\right],\{1,3\},3\right),\left(\left[\begin{array}{l}10\\01\end{array}\right],\{1,6\},3\right);$$
- for $n=8$, $$(\epsilon,D_\epsilon,k)=$$$$\left(\left[\begin{array}{l}0\\0\\1\\0\end{array}\right],\{4\},2\right),\left(\left[\begin{array}{l}1\\1\\0\\0\end{array}\right],\{1,2\},2\right),\left(\left[\begin{array}{l}1\\1\\1\\0\end{array}\right],\{1,2,4\},4\right),$$$$\left(\left[\begin{array}{l}0\\0\\1\\1\end{array}\right],\{4,8\},6\right),\left(\left[\begin{array}{l}1\\1\\0\\1\end{array}\right],\{1,2,8\},6\right),\left(\left[\begin{array}{l}1\\1\\1\\1\end{array}\right],\{1,2,4,8\},8\right);$$
- for $n=9$, $$(\epsilon,D_\epsilon,k)=\left(\left[\begin{array}{l}0\\0\\1\end{array}\right],\{9\},6\right),\left(\left[\begin{array}{l}1\\1\\0\end{array}\right],\{1,3\},3\right),\left(\left[\begin{array}{l}1\\1\\1\end{array}\right],\{1,3,9\},9\right);$$
- for $n=10$, we have $p_1=2, p_2=5$, $\alpha_1=1, \alpha_2=1$, and
$$(\epsilon,D_\epsilon,k)=$$$$\left(\left[\begin{array}{l}01\\10\end{array}\right],\{2,5\},5\right),\left(\left[\begin{array}{l}01\\01\end{array}\right],\{5,10\},8\right),\left(\left[\begin{array}{l}00\\11\end{array}\right],\{2,10\},5\right),$$$$\left(\left[\begin{array}{l}10\\10\end{array}\right],\{1,2\},2\right),\left(\left[\begin{array}{l}11\\00\end{array}\right],\{1,5\},5\right),\left(\left[\begin{array}{l}10\\01\end{array}\right],\{1,10\},5\right).$$ 
Similarly, for $n=15$, the set of values of $k$ given by Corollary \ref{c2} is $\{3,4,\dots,12,15\}$, and for $n=21$, it is $\{3,4,\dots,18,21\}$. 


\bigskip

The following corollary gives an infinite class of values of $(k,n)$ such that the multiplicative inverse $(n,n)$-function is not $k$th order sum-free.

\begin{cor}\label{cc3}
If $n$ is divisible by an integer $s\geq 2$ and if $r\leq n/s$ is the degree of any divisor of $X^{n/s}+1$ in $\mathbb F_2[X]$, then  the multiplicative inverse function is not $(sr)$th order sum-free.
\end{cor}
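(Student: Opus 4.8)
The plan is to invoke Theorem~\ref{pp9}, which reduces the claim to producing a monic degree-$(sr)$ factor of $X^n+1$ in $\f_2[X]$ in which the coefficient of $X$ is zero. So the only real content is to build such a factor out of the hypothesised divisor of $X^{n/s}+1$.

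Set $m=n/s$, an integer since $s\mid n$, and let $g\in\f_2[X]$ be a divisor of $X^m+1$ with $\deg g=r$ (monic, as is every nonzero polynomial over $\f_2$). I would take the candidate factor to be $P(X):=g(X^s)$. First, I would verify $P\mid X^n+1$: from $X^m+1=g(X)h(X)$ with $h\in\f_2[X]$, applying the ring homomorphism $f(X)\mapsto f(X^s)$ gives $X^n+1=X^{sm}+1=g(X^s)h(X^s)$, whence $P\mid X^n+1$. Second, I would read off the shape of $P$: it is monic of degree $sr\le s\cdot(n/s)=n$; its constant term $g(0)$ is nonzero because $P$ divides $X^n+1$; and, crucially, since $s\ge2$ every exponent occurring in $g(X^s)$ lies in $\{0,s,2s,\dots,rs\}$ and so misses $1$, i.e.\ the coefficient of $X$ in $P$ vanishes. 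Hence $P(X)=X^{sr}+a_{sr-1}X^{sr-1}+\cdots+a_2X^2+a_0$ with all $a_i\in\f_2$, exactly the form demanded by Theorem~\ref{pp9}, and that theorem then gives at once that $\fin$ is not $(sr)$th order sum-free.

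I do not foresee a genuine obstacle: the whole argument is the single observation that the substitution $X\mapsto X^s$ carries divisors of $X^m+1$ to divisors of $X^{sm}+1$ while dilating every exponent by the factor $s\ge2$, which is precisely what annihilates the linear term. The only points worth a remark are that $P$ is monic (automatic over $\f_2$, as Theorem~\ref{pp9} requires) and that one should take $g$ nonconstant, i.e.\ $r\ge1$, so that $k=sr\ge2$ and the statement is not vacuous; the extreme case $g=X^m+1$ gives $P=X^n+1$ and merely recovers the trivial fact that $\fin$ sums to zero over all of $\f_{2^n}$.
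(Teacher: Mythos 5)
Your proposal is correct and is essentially identical to the paper's own one-line proof: take a degree-$r$ divisor $R(X)$ of $X^{n/s}+1$, observe that $P(X)=R(X^s)$ is a degree-$sr$ divisor of $X^n+1$ with no term in $X$ (since $s\ge 2$ dilates all exponents), and apply Theorem~\ref{pp9}. The extra verifications you supply (monicity, the substitution homomorphism, the nonvacuity remark) are harmless elaborations of the same argument.
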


\begin{proof} 
Let $R(X)$ be such a divisor of degree $r$ of $X^{n/s}+1$, then $P(X)=R(X^s)$ is a divisor of degree $sr$ of $X^n+1$ and it has no term in $X$.
\end{proof}

In Corollary~\ref{cc3}, the larger the $s$, the smaller the number of the values reached by $k=sr$.\\

\noindent {\bf Remark}. Here also we can take for $R(X)$ the product of the cyclotomic polynomials of any distinct indices dividing $n/s$. The situation is simpler than in Corollary \ref{c2}, since there is no condition on $\epsilon$. But the number of values reached by $k$ is smaller. Taking $n=12$ or $n=8$ does not add new values of $k$ to those found in Corollary \ref{c2}, but for $n=6$, we obtain $k=2,4,6$ and 6 is new. 

\bigskip

\noindent {\bf Case $\boldsymbol n$ odd.} For $n$ odd, the divisors $P(X)\in \mathbb F_2[X]$ of $X^n+1$ are the generator polynomials of the binary cyclic codes of length $n$ over $\mathbb{F}_2$ \cite{CC-Sloane}. 
Given any binary cyclic code having for nonzeros 1 and at least another element, its generator polynomial $g(X)$ satisfies $X^n+1=(X+1)g(X)h(X)$ for some binary (non-trivial) polynomial $h(X)$, and one of the two polynomials $g(X)$ and $h(X)$ has no term in $X$, because $g(X)h(X)=(X^n+1)/(X+1)=\sum_{i=0}^{n-1}X^i$, and the sum of the coefficients of $X$ in $g(X)$ and $h(X)$ equals then 1. If $n$ is a prime, then the degree $k$ of this polynomial is co-prime with $n$. We do not know in general whether $k$ equals the degree of $g$ or that of $h$; if the code is the binary quadratic residue code, with $n\equiv \pm 1\pmod 8$, then these two polynomials having the same degree, we have $k=(n-1)/2$. But  there are values of $n$ for which the method does not work, because $(X^n+1)/(X+1)$ is irreducible over $\mathbb{F}_2$; this happens if and only if the cyclotomic class of 2 modulo $n$ containing 1 equals the whole $(\mathbb{Z}/n\mathbb{Z})\setminus \{0\}$, that is, 2 is a primitive element modulo $n$. 

\subsection{Why it is not enough to consider binary polynomials $\boldsymbol{L_{E_k}}$, that is, binary matrices $\boldsymbol A$ only}
For fixed $k$, there is a finite number (namely,  $2^{k-2}$) of binary companion $k\times k$ matrices $A$ of the form (\ref{defA}) such that $b_0=1$ and $b_1=0$, and taking for $n$ a prime number strictly larger than all prime numbers dividing the orders (necessarily larger than 1) of these matrices, we see that, for every $k$, there are values of $n$  such that $A^n\neq I_k$ for every such matrix.

 In \cite{McGM}, a particular type of trinomials of the form $X^{q^d}+bX^q+aX\in \mathbb{F}_{q^m}[X]$ was studied, where $q$ is a power of a prime\footnote{The conditions so that they split over $\mathbb{F}_{2^n}$ are strong and this means that almost all of such polynomials actually do not split.}. However, 
\\- if we take $q=2$ (and $m=n$), then since $b$ needs to be zero, being then the coefficient of $X^2$, and the equation $x^{q^d}+ax=0$ splitting only if $d$ divides $n$, we are back to the case where $k=d$ divides $n$;
\\- if we assume that $m$ is a strict divisor of $n$ and  $q=2^{n/m}$, then $k=nd/m$ satisfies $\gcd(k,n)\geq n/m\geq 2$ and we get no new case where the inverse function is not $k$th order sum-free either. 

Note that when the number of cyclotomic classes (and hence, the maximal number of the minimal polynomials which are factors of $L_{E_k}(X)$) is as small as 2 (this happens with some primes $n=3,5,11,13,19,29,37,53,59,61,\dots $), the only factors with binary coefficients of $X^{2^n}+X$ are $X+1$ and $\sum_{i=0}^{n-1}X^{2^i}$ and none has a coefficient of $X^2$ equal to 0. 

With the observations above, we see that the question of determining whether the multiplicative inverse function is $k$th order sum-free over $\mathbb{F}_{2^n}$ for some $n$ and some $k$ not dividing $n$ is difficult, unless $k$ is small or large.


\section{An Alternative Approach}

In this section, we revisit, from the viewpoint of determinants, a result from \cite[Corollary 2]{Carlet-0} and from Theorem \ref{pp9} above. This new approach will also allow us to prove in the next section that $\fin$ is not $k$th order sum-free when
$k$ is small or large (approximately $k\leq n/10$ or $k\geq 9n/10$).

\subsection{An approach through determinants}
Define
\begin{equation}\label{1.3}
\Delta(X_1,\dots,X_k)=\left|
\begin{matrix}
X_1&\cdots&X_k\cr
X_1^2&\cdots&X_k^2\cr
\vdots&&\vdots\cr
X_1^{2^{k-1}}&\cdots&X_k^{2^{k-1}}
\end{matrix}\right|,
\end{equation}
and for $0\le i\le k$,
\begin{equation}\label{1.4}
\Delta_i(X_1,\dots,X_k)=\left|
\begin{matrix}
X_1&\cdots&X_k\cr
\vdots&&\vdots\cr
X_1^{2^{i-1}}&\cdots&X_k^{2^{i-1}}\cr
X_1^{2^{i+1}}&\cdots&X_k^{2^{i+1}}\cr
\vdots&&\vdots\cr
X_1^{2^{k}}&\cdots&X_k^{2^{k}}
\end{matrix}\right|.
\end{equation}
These are polynomials in $\Bbb Z[X_1,\dots,X_k]$. However, for our purpose, we treat them as polynomials in $\f_2[X_1,\dots,X_k]$; $\Delta(X_1,\dots,X_k)$ is known as the {\em Moore determinant} over $\f_2$ \cite{Moore-BAMS-1896}.
By \cite[Lemma~3.51]{Lidl-Niederreiter-FF-1997}, 
\begin{equation}\label{1.5}
\Delta(X_1,\dots,X_k)=\prod_{{\bf 0}\ne(a_1,\dots,a_k)\in\f_2^k}(a_1X_1+\cdots+a_kX_k)=\prod_{{\bf 0}\ne\ba\in\f_2^k}(\ba\cdot\x),
\end{equation}
where ${\bf 0}=(0,\dots ,0)$, $\x=(X_1,\dots,X_k)$ and $\ba\cdot\x=a_1X_1+\cdots+a_kX_k$ for $\ba=(a_1,\dots,a_k)\in\f_2^k$. Obviously, $\Delta_0(X_1,\dots,X_k)=\Delta(X_1,\dots,X_k)^2$ and $\Delta_k(X_1,\dots,$ $X_k)=\Delta(X_1,\dots,X_k)$. However, for $1\le i\le k-1$, the formula for $\Delta_i(X_1,\dots,X_k)$ is too complicated to be useful; see \cite[Appendix]{Hou-Sze-LMA02022}. We also know that \cite[Exercise~2.15]{Hou-ams-gsm-2018}
\begin{align*}
\prod_{\ba\in\f_2^k}(Y+\ba\cdot\x)\,&=\frac{\Delta(Y,X_1,\dots,X_k)}{\Delta(X_1,\dots,X_k)}\cr
&=\frac 1{\Delta(X_1,\dots,X_k)}\sum_{i=0}^k\Delta_i(X_1,\dots,X_k)Y^{2^i}\cr
&=\sum_{i=0}^kb_{ki}Y^{2^i},
\end{align*}
where 
\[
b_{ki}=\frac{\Delta_i(X_1,\dots,X_k)}{\Delta(X_1,\dots,X_k)}.
\]
Let $v_1,\dots,v_k\in\f_{2^n}$ be linearly independent over $\f_2$. Then by \cite{Carlet-0,Carlet-pre},
\begin{equation}\label{1.6}
\sum_{0\ne x\in\langle v_1,\dots,v_k\rangle}\frac 1x=\frac{b_{k1}}{b_{k0}}=\frac{\Delta_1(v_1,\dots,v_k)}{\Delta(v_1,\dots,v_k)^2}.
\end{equation}
Therefore, $\fin$ is not $k$th order sum-free if and only if there exist $v_1,\dots,v_k\in\f_{2^n}$ such that $\Delta_1(v_1,\dots,v_k)=0$ but $\Delta(v_1,\dots,v_k)\ne 0$.

\medskip

The next theorem is equivalent to Theorem \ref{pp9}. We state and prove it for clarity.
\begin{thm}\label{T2.1}
The following two statements are equivalent:
\begin{itemize}
\item[(i)] There exists $x\in\f_{2^n}$ such that $\Delta(x,x^2,x^{2^2},\dots,x^{2^{k-1}})\ne0$ and\\ $\Delta_1(x,x^2,x^{2^2},\dots,x^{2^{k-1}})=0$.

\item[(ii)] $X^n-1$ has a factor $X^k+a_{k-1}X^{k-1}+\cdots+a_2X^2+a_0\in\f_2[X]$.
\end{itemize}
\end{thm}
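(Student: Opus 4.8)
The plan is to run the correspondence set up in this section between the tuple $x,x^2,\dots,x^{2^{k-1}}$ and the subspace polynomial. When $x,x^2,\dots,x^{2^{k-1}}$ are $\f_2$-linearly independent, $E_k:=\langle x,x^2,\dots,x^{2^{k-1}}\rangle$ is $k$-dimensional and
\[
L_{E_k}(Y)=\sum_{i=0}^{k}b_iY^{2^i},\qquad b_i=\frac{\Delta_i(x,x^2,\dots,x^{2^{k-1}})}{\Delta(x,x^2,\dots,x^{2^{k-1}})},\quad b_k=1,
\]
so (i) says exactly that $E_k$ is $k$-dimensional and the coefficient $b_1$ of $Y^2$ in $L_{E_k}$ is $0$. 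Besides this I will use Theorem~\ref{pp9} in the form: a linearized polynomial $\sum_{i=0}^{k}c_iX^{2^i}\in\f_2[X]$ with $c_k=1$ splits completely over $\f_{2^n}$ with simple roots if and only if its $2$-associate $\sum_{i=0}^{k}c_iX^i$ divides $X^n-1$.

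For $(\mathrm{ii})\Rightarrow(\mathrm{i})$, I would start from a factor $P(X)=X^k+a_{k-1}X^{k-1}+\dots+a_2X^2+a_0$ of $X^n-1$ and form its $2$-associate $L(Y)=Y^{2^k}+a_{k-1}Y^{2^{k-1}}+\dots+a_2Y^{2^2}+a_0Y$. By Theorem~\ref{pp9}, $L$ splits over $\f_{2^n}$ with simple roots; its root set $E_k$ is then a $k$-dimensional $\f_2$-subspace with $L_{E_k}=L$, and it is stable under the Frobenius map $\phi\colon y\mapsto y^2$ because $L$ has coefficients in $\f_2$. Next I would show $E_k$ is a cyclic $\f_2[\phi]$-module: by the normal basis theorem $\f_{2^n}\cong\f_2[X]/(X^n-1)$ as $\f_2[X]$-modules with $X$ acting as $\phi$, and under this isomorphism $E_k=\ker P(\phi)$ corresponds to the principal ideal generated by $(X^n-1)/P$, whose generator has annihilator $(P)$, of degree $k$. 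Pulling a generator back to $x\in E_k$ gives $E_k=\langle x,x^2,\dots,x^{2^{k-1}}\rangle$ with $x,\dots,x^{2^{k-1}}$ linearly independent, so $\Delta(x,x^2,\dots,x^{2^{k-1}})\ne 0$, while $\Delta_1(x,x^2,\dots,x^{2^{k-1}})=0$ because the coefficient of $Y^2$ in $L_{E_k}=L$ is $0$. This is (i).

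For $(\mathrm{i})\Rightarrow(\mathrm{ii})$, put $E_k=\langle x,x^2,\dots,x^{2^{k-1}}\rangle$, which is $k$-dimensional since $\Delta(x,\dots,x^{2^{k-1}})\ne 0$, and write $L_{E_k}(Y)=\sum_{i=0}^{k}b_iY^{2^i}$ with $b_k=1$ and $b_1=0$. The crux is that $E_k$ is $\phi$-stable, i.e. $x^{2^k}\in E_k$: granting this, $L_{E_k}$ has coefficients in $\f_2$, the polynomial $P(X):=X^k+b_{k-1}X^{k-1}+\dots+b_2X^2+b_0$ lies in $\f_2[X]$, has degree $k$ and no term in $X$, and since $L_{E_k}$ divides $Y^{2^n}-Y$ (all its roots lie in $\f_{2^n}$), Theorem~\ref{pp9} yields $P(X)\mid X^n-1$, which is (ii). To get the stability: $x$ and $x^2$ both lie in $E_k$, hence are roots of $L_{E_k}$, so $\sum_{i=0}^{k}b_ix^{2^i}=0$ and $\sum_{i=0}^{k}b_ix^{2^{i+1}}=0$; squaring the first identity and adding the second gives $\sum_{i=0}^{k}(b_i+b_i^2)x^{2^{i+1}}=0$, in which the terms $i=1$ and $i=k$ drop out because $b_1=0$ and $b_k=1$. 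What remains is an $\f_{2^n}$-linear relation among $x^{2},x^{2^3},x^{2^4},\dots,x^{2^k}$, all of which belong to $\{x,x^2,\dots,x^{2^k}\}$. If $x^{2^k}\notin E_k$, then $x,x^2,\dots,x^{2^k}$ are linearly independent, which forces $b_i+b_i^2=0$ for every $i$, hence $L_{E_k}\in\f_2[Y]$ and $E_k$ is $\phi$-stable, so $x^{2^k}\in E_k$ --- a contradiction. Therefore $x^{2^k}\in E_k$.

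The one delicate point is this last step in $(\mathrm{i})\Rightarrow(\mathrm{ii})$: a priori $\langle x,x^2,\dots,x^{2^{k-1}}\rangle$ need not be Frobenius-stable even though its spanning set is an initial segment of a Frobenius orbit, and it is precisely the hypothesis $\Delta_1=0$ (equivalently $b_1=0$) that makes the two kernel relations collapse onto a relation forcing stability; the bookkeeping of which powers $x^{2^j}$ survive in that relation is the only thing requiring care. Everything else is routine: in $(\mathrm{ii})\Rightarrow(\mathrm{i})$, identifying $E_k=\ker P(\phi)$ with the principal ideal $\bigl((X^n-1)/P\bigr)$ of $\f_2[X]/(X^n-1)$ via a normal basis gives at once both a generator $x$ with $E_k=\langle x,x^2,\dots,x^{2^{k-1}}\rangle$ and the linear independence of $x,\dots,x^{2^{k-1}}$, after which the determinant conditions are immediate from $L_{E_k}=L$.
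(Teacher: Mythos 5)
Your $(\mathrm{ii})\Rightarrow(\mathrm{i})$ direction is sound and is essentially the paper's own argument in module-theoretic dress: the paper writes $X^n-1=fg$, takes a normal element $\alpha$, and sets $x=(g(\sigma))(\alpha)$, which is precisely your generator of the ideal $\bigl((X^n-1)/P\bigr)$ pulled back through the normal-basis isomorphism. The problem is in $(\mathrm{i})\Rightarrow(\mathrm{ii})$, at exactly the step you flag as delicate. From $L_{E_k}(x)=0$ and $L_{E_k}(x^2)=0$ you correctly obtain
\[
\sum_{i\in\{0,2,3,\dots,k-1\}}\bigl(b_i+b_i^2\bigr)\,x^{2^{i+1}}=0,
\]
but the coefficients $b_i+b_i^2$ lie in $\f_{2^n}$, not in $\f_2$: the $b_i=\Delta_i(x,\dots,x^{2^{k-1}})/\Delta(x,\dots,x^{2^{k-1}})$ are a priori arbitrary elements of $\f_{2^n}$. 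Linear independence of $x,x^2,\dots,x^{2^k}$ over $\f_2$ only forces $\f_2$-linear relations to be trivial; it says nothing about an $\f_{2^n}$-linear relation (any two nonzero elements of $\f_{2^n}$ are already $\f_{2^n}$-linearly dependent). So ``which forces $b_i+b_i^2=0$ for every $i$'' is a non sequitur, and the contradiction you need never materializes. The target claim --- that under (i) the space $E_k$ is Frobenius-stable --- is in fact true, but a single relation cannot establish it; at minimum you would need all $k-1$ relations coming from $L_{E_k}(x^{2^j})=0$, $j=0,\dots,k-1$, together with a kernel-dimension count for the linearized polynomial $\sum_i(b_i+b_i^2)Y^{2^i}$.

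The paper sidesteps all of this with one observation you did not use: evaluated on a Frobenius orbit, $\Delta_1(x,x^2,\dots,x^{2^{k-1}})$ is (the transpose of) the Moore determinant of $x,\sigma^2(x),\dots,\sigma^k(x)$, so the hypothesis $\Delta_1=0$ is \emph{literally} an $\f_2$-linear dependence $a_0x+a_2\sigma^2(x)+\cdots+a_k\sigma^k(x)=0$. This hands you directly a polynomial $f=a_kX^k+\cdots+a_2X^2+a_0\in\f_2[X]$ with $(f(\sigma))(x)=0$ --- with coefficients in $\f_2$ from the start, which is the whole point. A short gcd argument then shows $a_k\ne0$ and $f\mid X^n-1$: otherwise $\gcd(f,X^n-1)$ would have degree $<k$ and still annihilate $x$, contradicting $\Delta(x,\dots,x^{2^{k-1}})\ne0$. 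I recommend replacing your stability argument with this.
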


\begin{proof}
Let $\sigma$ denote the Frobenius automorphism  of $\f_{2^n}$ over $\f_2$. Note that 
\begin{align*}
&\Delta(x,x^2,x^{2^2},\dots,x^{2^{k-1}})\ne0\cr
\Leftrightarrow\ &x,\sigma(x),\dots,\sigma^{k-1}(x)\ \text{are linearly independent over}\ \f_2,
\end{align*}
and 
\begin{align*}
&\Delta_1(x,x^2,x^{2^2},\dots,x^{2^{k-1}})=0\cr
\Leftrightarrow\ &x,\sigma^2(x),\dots,\sigma^{k}(x)\ \text{are linearly dependent over}\ \f_2.
\end{align*}

\medskip
(ii) $\Rightarrow$ (i). Let $\alpha\in\f_{2^n}$ be a normal element over $\f_2$. Write $X^n-1=fg$, where $f=X^k+a_{k-1}X^{k-1}+\cdots+a_2X^2+a_0$. Let $x=(g(\sigma))(\alpha)$. For each $0\ne h\in\f_2[X]$ with $\deg h<k$, $hg\not\equiv 0\pmod{X^n-1}$, whence $(h(\sigma))(x)=((hg)(\sigma))(\alpha)\ne 0$. Hence $x,\sigma(x),\dots,\sigma^{k-1}(x)$ are linearly independent over $\f_2$. 

Since $(f(\sigma))(x)=((fg)(\sigma))(\alpha)=0$, the elements $x,\sigma^2(x),\dots,\sigma^k(x)$ are linearly dependent over $\f_2$.

\medskip
(i) $\Rightarrow$ (ii). Since $x,\sigma^2(x),\dots,\sigma^k(x)$ are linearly dependent over $\f_2$, there exists $0\ne f=a_kX^k+a_{k-1}X^{k-1}+\cdots+a_2X^2+a_0\in\f_2[X]$ such that $(f(\sigma))(x)=0$. We claim that $a_k\ne0$ and $f\mid X^n-1$. Otherwise, $f_1:=\text{gcd}(f,X^n-1)$ has degree $<k$ and $(f_1(\sigma))(x)=0$. Then $x,\sigma(x),\dots,\sigma^{k-1}(x)$ are linearly dependent over $\f_2$, which is a contradiction. 
\end{proof}

\begin{cor}\label{C2.2}
If $X^n-1$ has a factor $X^k+a_{k-1}X^{k-1}+\cdots+a_2X^2+a_0\in\f_2[X]$, then $\fin$ is not $k$th order sum-free.
\end{cor}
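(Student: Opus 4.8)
The plan is to combine Theorem~\ref{T2.1} with the determinantal criterion for sum-freedom recorded in~\eqref{1.6}. Recall the observation made just before Theorem~\ref{T2.1}: the function $\fin$ fails to be $k$th order sum-free precisely when there exist $v_1,\dots,v_k\in\f_{2^n}$ that are linearly independent over $\f_2$ (equivalently, $\Delta(v_1,\dots,v_k)\ne 0$) and satisfy $\Delta_1(v_1,\dots,v_k)=0$, since then~\eqref{1.6} gives $\sum_{0\ne x\in\langle v_1,\dots,v_k\rangle}1/x=\Delta_1(v_1,\dots,v_k)/\Delta(v_1,\dots,v_k)^2=0$, which is exactly condition~\eqref{1.2}.

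First I would invoke Theorem~\ref{T2.1}. Since $X^n-1$ (which is $X^n+1$ in characteristic $2$) has a factor $X^k+a_{k-1}X^{k-1}+\cdots+a_2X^2+a_0\in\f_2[X]$, statement (ii) of that theorem holds, hence so does statement (i): there exists $x\in\f_{2^n}$ with $\Delta(x,x^2,x^{2^2},\dots,x^{2^{k-1}})\ne 0$ and $\Delta_1(x,x^2,x^{2^2},\dots,x^{2^{k-1}})=0$.

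Then I would set $v_i:=x^{2^{i-1}}$ for $1\le i\le k$. The nonvanishing $\Delta(v_1,\dots,v_k)\ne 0$ guarantees that these $v_i$ are $\f_2$-linearly independent, so $E:=\langle v_1,\dots,v_k\rangle$ is a genuine $k$-dimensional linear subspace of $\f_{2^n}$. Substituting into~\eqref{1.6} and using $\Delta_1(v_1,\dots,v_k)=0$ gives $\sum_{0\ne u\in E}1/u=0$, i.e.\ condition~\eqref{1.2} holds for $E$, and therefore $\fin$ is not $k$th order sum-free.

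There is no real obstacle here: the corollary is essentially a restatement of the implication (ii)$\Rightarrow$(i) of Theorem~\ref{T2.1} read through the dictionary~\eqref{1.6}. The only points requiring a moment's care are noticing that $X^n-1$ and $X^n+1$ coincide over $\f_2$, and observing that the tuple $(v_1,\dots,v_k)=(x,x^2,\dots,x^{2^{k-1}})$ genuinely spans a $k$-dimensional space — which is precisely what the nonvanishing of the Moore determinant $\Delta$ furnishes — so that the linear dependence forced by $\Delta_1=0$ does translate into the vanishing of $\sum_{0\ne u\in E}1/u$ over an honest $k$-dimensional subspace.
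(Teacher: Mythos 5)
Your proposal is correct and follows exactly the route the paper intends: the corollary is the implication (ii)$\Rightarrow$(i) of Theorem~\ref{T2.1} combined with the criterion derived from \eqref{1.6} (stated just before the theorem), applied to $v_i=x^{2^{i-1}}$. The paper leaves this deduction implicit, and your write-up supplies precisely the missing routine details.
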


\medskip
\noindent{\bf Note.} If we replace $k$ by $n-k$ in Corollary~\ref{C2.2}, we do not get anything new.

\subsection{Factorization of $\boldsymbol{X^n-1}$ over $\boldsymbol{\f_2}$}

This is a well-studied topic, which we briefly revisit because of Corollary~\ref{C2.2}. We are interested in the factors of $X^n-1$ of the form $X^k+a_{k-1}X^{k-1}+\cdots+a_2X^2+a_0$, where $a_i\in \mathbb F_2$, or equivalently, by considering the reciprocals, those of the form $X^k+a_{k-2}X^{k-2}+\cdots+a_0$. 

Let $n=2^et$, where $e\ge 0$ and $2\nmid t$, so that $X^n-1=(X^t-1)^{2^e}$. 
Recall that $X^t-1=\prod_{d\mid t}\Phi_d(X)$, where $\Phi_d$ is the cyclotomic polynomial of index $d$. For each $d\mid t$, let $o_d(2)$ denote the order of $2$ in $(\Bbb Z/d\Bbb Z)^\times$. The irreducible factors of $\Phi_d(X)$ in $\f_2[X]$ correspond to the $2$-cyclotomic cosets in $(\Bbb Z/d\Bbb Z)^\times$ and their degrees equal the sizes of the 
$2$-cyclotomic cosets. All $2$-cyclotomic cosets in $(\Bbb Z/d\Bbb Z)^\times$ have size $o_d(2)$ and there are $\phi(d)/o_d(2)$ such cyclotomic cosets in $(\Bbb Z/d\Bbb Z)^\times$, where $\phi$ is Euler's totient function. Hence, the multiset of the degrees of the irreducible factors of $X^t-1$ over $\f_2$ consists of $o_d(2)$ with multiplicity $\phi(d)/o_d(2)$ for all $d\mid t$. 

For $d\mid t$, let $l=o_d(2)$, we have
\begin{align*}
N_d\,&:=|\{f=X^l+b_{l-2}X^{l-2}+\cdots+b_0\in\f_2[X]\ \text{irreducible}:f\mid X^t-1\}|\cr
&=\frac 1l|\{x\in\f_{2^l}:o(x)=d,\ \text{Tr}(x)=0\}|,
\end{align*}
where $o(x)$ denotes the multiplicative order of $x$ and $\text{Tr}=\text{Tr}_{2^l/2}$. Indeed, if $X^l+b_{l-1}X^{l-1}+b_{l-2}X^{l-2}+\cdots+b_0\in\f_2[X]$ is irreducible and $x$ is a zero of this polynomial, then $b_{l-1}=\text{Tr}(x)$ and there are $l$ such zeros. Note that $N_d$ depends only on $l$ but not on $t$. 
An irreducible polynomial in $\f_2[X]$ of the form $X^l+b_{l-2}X^{l-2}+\cdots+b_0$ is said to have {\em zero trace}. Consider an arbitrary factor $f$ of $X^n-1$. For each $d\mid t$, among the irreducible factors of $f$ of degree $o_d(2)$, let $\mu_d$ be the number of those (counting multiplicity) with zero trace and $\nu_d$ be the number of those with nonzero trace. Then $\mu_d\le 2^eN_d$ and $\mu_d+\nu_d\le 2^e\phi(d)/o_2(d)$. Moreover, $f$ is of the form $X^k+a_{k-2}X^{k-2}+\cdots+a_0$ if and only if $\sum_{d\mid t}\nu_d$ is even. Therefore, $X^n-1$ has a factor $X^k+a_{k-2}X^{k-2}+\cdots+a_0\in\f_2[X]$ if and only if
\begin{equation}\label{44.1}
2\le k=\sum_{d\mid t}(\mu_d+\nu_d)o_d(2)
\end{equation}
for some integer sequences $\mu_d$ and $\nu_d$ such that 
\[
\begin{cases}
0\le\mu_d\le 2^eN_d,\vspace{0.2em}\cr
0\le\nu_d\le 2^e(\phi(d)/o_d(2)-N_d),\vspace{0.3em}\cr
\displaystyle\sum_{d\mid t}\nu_d\equiv 0\pmod 2.
\end{cases}
\]
Let $\mathcal K_n$ denote the set of integers $k$ in \eqref{44.1}. Then Corollary~\ref{C2.2} can be stated as

\begin{cor}\label{C4.1}
$\fin$ is not $k$th order sum-free if $k\in\mathcal K_n$.
\end{cor}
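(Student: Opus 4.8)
The plan is to show that membership in $\mathcal K_n$ is exactly the hypothesis of Corollary~\ref{C2.2} after passing to reciprocal polynomials, so that the statement follows immediately. First I would verify that \eqref{44.1}, together with its side constraints, parametrizes precisely the degrees $k\ge 2$ of the monic factors of $X^n-1$ having no term of degree $k-1$. Writing $n=2^et$ with $t$ odd, so that $X^n-1=(X^t-1)^{2^e}$, every monic factor $f$ of $X^n-1$ is obtained by assigning to each irreducible factor $\pi$ of $X^t-1$ a multiplicity $m_\pi\in\{0,\dots,2^e\}$. Grouping these irreducibles according to the divisor $d\mid t$ with $\pi\mid\Phi_d$ and according to whether $\pi$ has zero or nonzero trace, one sets $\mu_d=\sum m_\pi$ over the zero-trace $\pi$ coming from $\Phi_d$ and $\nu_d=\sum m_\pi$ over the nonzero-trace ones; these range over exactly the displayed bounds, the bound $\mu_d\le 2^eN_d$ being attained because $\mu_d$ may be distributed arbitrarily among the $N_d$ zero-trace irreducible factors of $\Phi_d$ each taken with multiplicity at most $2^e$, and similarly for $\nu_d$. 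Then $\deg f=\sum_{d\mid t}(\mu_d+\nu_d)o_d(2)=k$. Since the next-to-leading coefficient of a product of monic polynomials is the sum of the next-to-leading coefficients of the factors, and that coefficient is $0$ for a zero-trace irreducible and $1$ for a nonzero-trace one, the coefficient of $X^{k-1}$ in $f$ equals $\sum_{d\mid t}\nu_d$ reduced modulo $2$. Hence $f$ has no $X^{k-1}$ term if and only if $\sum_{d\mid t}\nu_d$ is even, which is the parity condition defining $\mathcal K_n$.

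Next I would convert ``no $X^{k-1}$ term'' into ``no $X^1$ term'' using reciprocals. Because $X^n-1$ has nonzero constant term, any factor $f$ satisfies $X\nmid f$, so its reciprocal $\tilde f(X)=X^{\deg f}f(1/X)$ is again monic of degree $k$ over $\f_2$ (its leading coefficient is $f(0)=1$), and $\tilde f\mid X^n-1$ since $X^n-1$ is self-reciprocal over $\f_2$. As the coefficient of $X^1$ in $f$ equals the coefficient of $X^{k-1}$ in $\tilde f$, the set $\mathcal K_n$ is also the set of degrees $k\ge 2$ of monic factors of $X^n-1$ of the form $X^k+a_{k-1}X^{k-1}+\cdots+a_2X^2+a_0\in\f_2[X]$. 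For any such $k$ such a factor exists, so Corollary~\ref{C2.2} yields that $\fin$ is not $k$th order sum-free, which completes the proof.

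I expect the only substantive point to be the bookkeeping in the first paragraph: identifying monic factors of $X^n-1$ with multiplicity vectors over the irreducible factors of $X^t-1$, checking that next-to-leading coefficients add coefficient-wise under multiplication, and confirming that every admissible pair of sequences $(\mu_d)$, $(\nu_d)$ is realized by an actual factor. The reciprocal step and the appeal to Corollary~\ref{C2.2} are routine.
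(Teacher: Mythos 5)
Your proposal is correct and follows essentially the same route as the paper: the paper's ``proof'' of Corollary~\ref{C4.1} is precisely the preceding discussion in Section~3.2, namely the parametrization of monic factors of $X^n-1=(X^t-1)^{2^e}$ by multiplicities of zero-trace and nonzero-trace irreducibles (giving the $\mu_d,\nu_d$ bookkeeping and the parity criterion for the vanishing of the coefficient of $X^{k-1}$), combined with the reciprocal observation and Corollary~\ref{C2.2}. You have simply written out in full the details the paper leaves implicit, and they all check out.
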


The values of $o_d(2)$, $\phi(d)/o_d(2)$ and $N_d$ ($1\le d\le 31$, $d$ odd) are given in Table~\ref{tb1} and the sets $\mathcal K_n$ ($1\le n\le 32$) are given in Table~\ref{tb2}. Note that the examples in Section 2 are covered by Table~\ref{tb2}.

\begin{table}
\caption{$o_2(d)$, $\phi(d)/o_d(2)$ and $N_d$ ($1\le d\le 31$, $d$ odd)}\label{tb1}
   \renewcommand*{\arraystretch}{1.4}
    \centering
     \begin{tabular}{c|ccc}
         \hline
         $d$  &  $o_d(2)$ & $\phi(d)/o_d(2)$ &$N_d$ \\ \hline
         1 & 1 & 1 & 0 \cr 

3 & 2 & 1 & 0 \cr 

5 & 4 & 1 & 0 \cr 

7 & 3 & 2 & 1 \cr 

9 & 6 & 1 & 1 \cr 

11 & 10 & 1 & 0 \cr 

13 & 12 & 1 & 0 \cr 

15 & 4 & 2 & 1 \cr 

17 & 8 & 2 & 1 \cr 

19 & 18 & 1 & 0 \cr 

21 & 6 & 2 & 1 \cr 

23 & 11 & 2 & 1 \cr 

25 & 20 & 1 & 1 \cr 

27 & 18 & 1 & 1 \cr 

29 & 28 & 1 & 0 \cr 

31 & 5 & 6 & 3 \cr \hline
     \end{tabular}
\end{table}

\begin{table}
\caption{Elements of $\mathcal K_n$ ($1\le n\le 32$)}\label{tb2}
   \renewcommand*{\arraystretch}{1.4}
    \centering
     \begin{tabular}{c|l}
         \hline
         $n$ &\hfil elements of $\mathcal K_n$ \cr \hline
1\cr

2&2\cr

3&3\cr

4&2,4\cr

5&5\cr

6&2,3,4,6\cr

7&3,4,7\cr

8&2,4,6,8\cr

9&3,6,9\cr

10&2,5,8,10\cr

11&11\cr

12&2,3,4,5,6,7,8,9,10,12\cr

13&13\cr

14&2,3,4,5,6,7,8,9,10,11,12,14\cr

15&3,4,5,6,7,8,9,10,11,12,15\cr

16&2,4,6,8,10,12,14,16\cr

17&8,9,17\cr

18&2,3,4,6,8,9,10,12,14,15,16,18\cr

19&19\cr

20&2,4,5,7,8,10,12,13,15,16,18,20\cr

21&3,4,5,6,7,8,9,10,11,12,13,14,15,16,17,18,21\cr

22&2,11,20,22\cr

23&11,12,23\cr

24&2,3,4,5,6,7,8,9,10,11,12,13,14,15,16,17,18,19,20,21,22,24\cr

25&5,20,25\cr

26&2,13,24,26\cr

27&3,6,9,18,21,24,27\cr

28&2,3,4,5,6,7,8,9,10,11,12,13,14,15,16,17,18,19,20,21,22,23,24,25,26,28\cr

29&29\cr

30&2,3,4,5,6,7,8,9,10,11,12,13,14,15,16,17,18,19,20,21,22,23,24,25,26,27,28,30\cr

31&5,6,10,11,15,16,20,21,25,26,31\cr

32&2,4,6,8,10,12,14,16,18,20,22,24,26,28,30,32\cr \hline
     \end{tabular}
\end{table}

\begin{rmk}\label{R4.2}\rm
The number $N_d$ is difficult to compute. Let $l=o_d(2)$. We have
\[
\sum_{\substack{x\in\f_{2^l}\cr o(x)=d}}(-1)^{\text{Tr}(x)}=lN_d-(d-lN_d)=2lN_d-d.
\]
On the other hand, by the M\"obius inversion,
\begin{align*}
\sum_{\substack{x\in\f_{2^l}\cr o(x)=d}}(-1)^{\text{Tr}(x)}\,&=\sum_{d'\mid d}\mu\Bigl(\frac d{d'}\Bigr)\sum_{\substack{x\in\f_{2^l}\cr o(x)\mid d'}}(-1)^{\text{Tr}(x)}\cr
&=\sum_{d'\mid d}\mu\Bigl(\frac d{d'}\Bigr)\frac{d'}{2^l-1}\sum_{y\in\f_{2^l}^*}(-1)^{\text{Tr}(y^{(2^l-1)/d'})}\cr
&=\sum_{d'\mid d}\mu\Bigl(\frac d{d'}\Bigr)\frac{d'}{2^l-1}\Bigl(\sum_{y\in\f_{2^l}}(-1)^{\text{Tr}(y^{(2^l-1)/d'})}-1\Bigr),
\end{align*}
where $\mu(\ )$ is the M\"obius function. Let $\widehat{\f_{2^l}^*}$ denote the group of multiplicative characters of $\f_{2^l}$. In the above
\[
\sum_{y\in\f_{2^l}}(-1)^{\text{Tr}(y^{(2^l-1)/d'})}=\sum_{\substack{\chi\in\widehat{\f_{2^l}^*}\cr o(\chi)\mid(2^l-1)/d'}}G(\chi),
\]
where $G(\chi)$ is the Gauss sum of $\chi$ \cite[Exercise~3.4]{Hou-ams-gsm-2018}. Combining the above equations gives
\[
N_d=\frac1{2l}\biggl(d+\sum_{d'\mid d}\mu\Bigl(\frac d{d'}\Bigr)\frac{d'}{2^l-1}\Bigl(\sum_{\substack{\chi\in\widehat{\f_{2^l}^*}\cr o(\chi)\mid(2^l-1)/d'}}G(\chi)-1\Bigr)\biggr).
\]
Because of the involvement of the Gauss sums, we doubt that $N_d$ can be computed explicitly.
\end{rmk}

\section{When $k$ Is Small}

The algebraic closure of $\f_q$ is denoted by $\overline\f_q$. A polynomial $f\in\f_q[X_1,\dots,X_k]$ is said to be {\em square-free} if there is no $g\in\f_q[X_1,\dots,X_k]\setminus\f_q$ (equivalently, there is no $g\in\overline\f_q[X_1,\dots,X_k]\setminus\overline\f_q$) such that $g^2\mid f$. It is easy to see that $f$ is square-free if for each $1\le j\le k$, $f$ is a separable polynomial in $X_j$ over $\f_q(X_1,\dots,X_{j-1},X_{j+1},\dots,X_k)$. A polynomial $f$ in $X$ over a field is separable if and only if $\gcd(f,f')=1$. It is clear from \eqref{1.5} that $\Delta(X_1,\dots,X_k)$ is square-free. It follows from \eqref{1.4} that for each $1\le j\le k$, $\Delta_1(X_1,\dots,X_k)$ is a $2$-polynomial in $X_j$ whose coefficient of $X_j$ is nonzero, hence $\Delta_1(X_1,\dots,X_k)$ is separable in $X_j$. Therefore, $\Delta_1(X_1,\dots,X_k)$ is also square-free.

Recall from \eqref{1.5} that $\Delta(\x)=\prod_{\boldsymbol 0\ne\ba\in\f_2^k}\ba\cdot \x$, where $\x=(X_1,\dots,X_k)$. Let $0\le i\le k$. Clearly, $\ba\cdot \x\mid \Delta_i(\x)$ for all $\boldsymbol 0\ne\ba\in\f_2^k$. Hence $\Delta(\x)\mid\Delta_i(\x)$. Let 
\begin{equation}\label{3.0}
F_k(X_1,\dots,X_k)=\frac{\Delta_1(\x)}{\Delta(\x)}\in\f_2[X_1,\dots,X_k].
\end{equation}
We first gather some facts about $F_k$:
\begin{itemize}
\item $F_k$ is homogeneous and symmetric in $X_1,\dots,X_k$.

\item $\deg F_k=2^k-2$, $\deg_{X_i}F_k=2^{k-1}$.

\item $F_k$ is square-free.

\item $F_k\in \f_2[X_2,\dots,X_k][X_1]$ is an affine $2$-polynomial in $X_1$, i.e., the exponents of $X_1$ in $F_k$ are $0, 2^0,\dots,2^{k-1}$.
\end{itemize}

\begin{proof}[Proof of the last claim]
Treat both $\Delta_1(X_1,\dots,X_k)$ and $\Delta(X_1,\dots,X_k)$ as polynomials in $X_1$. Then $\Delta_1(X_1,\dots,X_k)$ is a separable $2$-polynomial with $\deg_{X_1}\Delta_1(X_1,$ $\dots,X_k)=2^k$. Hence its roots (in the algebraic closure of $\f_2(X_2,\dots,X_k)$) form a $k$-dimensional vector space $E$ over $\f_2$. The roots of $\Delta(X_1,\dots,X_k)$ form a $(k-1)$-dimensional subspace of $E$. Therefore, the roots of $F_k(X_1,\dots,X_k)$ form a $(k-1)$-dimensional affine space over $\f_2$, hence the claim.
\end{proof}

For $0\le i<j\le k+1$, define
\[
\Delta_{ij}(X_1,\dots,X_k)=\left|
\begin{matrix}
X_1&\cdots&X_k\cr
\vdots&&\vdots\cr
X_1^{2^{i-1}}&\cdots&X_k^{2^{i-1}}\cr
X_1^{2^{i+1}}&\cdots&X_k^{2^{i+1}}\cr
\vdots&&\vdots\cr
X_1^{2^{j-1}}&\cdots&X_k^{2^{j-1}}\cr
X_1^{2^{j+1}}&\cdots&X_k^{2^{j+1}}\cr
\vdots&&\vdots\cr
X_1^{2^{k+1}}&\cdots&X_k^{2^{k+1}}
\end{matrix}\right|.
\]
From \eqref{3.0}, we have
\begin{equation}\label{4.1}
\Delta_1(X_1,\dots,X_k)=\Delta(X_1,\dots,X_k)F_k(X_1,\dots,X_k).
\end{equation}
Write 
\[
\Delta_1(X_1,\dots,X_k)=A_kX_1^{2^k}+A_{k-1}X_1^{2^{k-1}}+\cdots+A_2X_1^{2^2}+A_0X_1,
\]
where
\[
A_i=\begin{cases}
\Delta(X_2,\dots,X_k)^4&\text{if}\ i=0,\cr
\Delta_{1i}(X_2,\dots,X_k)&\text{if}\ 2\le i\le k,
\end{cases}
\]
and write
\[
\Delta(X_1,\dots,X_k)=B_{k-1}X_1^{2^{k-1}}+B_{k-2}X_1^{2^{k-2}}+\cdots+B_0X_1,
\]
where
\[
B_i=\Delta_i(X_2,\dots,X_k),\quad 0\le i\le k-1.
\]
Further write
\begin{equation}\label{4.2}
F_k(X_1,\dots,X_k)=C_{k-1}X_1^{2^{k-1}}+C_{k-2}X_1^{2^{k-2}}+\cdots+C_0X_1+C_{-1},
\end{equation}
where $C_i\in \f_2[X_2,\dots,X_k]$, $-1\le i\le k-1$. The coefficients $C_i$ ($-1\le i\le k-1$) can be determined in terms of $\Delta_i(X_2,\dots,X_k)$ by comparing the coefficients of $X_1$ in \eqref{4.1}. First, we have $C_{k-1}B_{k-1}=A_k$, whence
\begin{equation}\label{4.3}
C_{k-1}=\frac{A_k}{B_{k-1}}=\frac{\Delta_1(X_2,\dots,X_k)}{\Delta(X_2,\dots,X_k)}=F_{k-1}(X_2,\dots,X_k).
\end{equation}
For $0\le i\le k-2$, we have $C_{k-1}B_i+C_iB_{k-1}=0$, whence
\begin{equation}\label{4.4}
C_i=C_{k-1}\frac{B_i}{B_{k-1}}=F_{k-1}(X_2,\dots,X_k)\frac{\Delta_i(X_2,\dots,X_k)}{\Delta(X_2,\dots,X_k)}.
\end{equation}
Finally, $C_{-1}B_0=A_0$, whence
\begin{equation}\label{4.5}
C_{-1}=\frac{A_0}{B_0}=\frac{\Delta(X_2,\dots,X_k)^4}{\Delta(X_2,\dots,X_k)^2}=\Delta(X_2,\dots,X_k)^2.
\end{equation}

As a by-product, we have a formula for $\Delta_{1i}(X_2,\dots,X_k)$ with $2\le i\le k-1$. (There is no need to consider $\Delta_{1k}(X_2,\dots,X_k)$ since $\Delta_{1k}(X_2,\dots,X_k)=\Delta_1(X_2,\dots,$ $X_k)$.) From \eqref{4.1}, we have
\[
A_i=B_{i-1}C_{i-1}+B_iC_{-1}.
\]
Hence, with $\x'=(X_2,\dots,X_k)$,
\begin{align}
\Delta_{1i}(\x')\,&=\Delta_{i-1}(\x')\frac{\Delta_1(\x')}{\Delta(\x')}\frac{\Delta_{i-1}(\x')}{\Delta(\x')}+\Delta_i(\x')\Delta(\x')^2\\ \label{4.6}
&=\frac{\Delta_1(\x')(\Delta_{i-1}(\x'))^2}{(\Delta(\x'))^2}+\Delta_i(\x')\Delta(\x')^2.\nonumber
\end{align}

The polynomial $F_k(X_1,\dots,X_k)$ contains critical information about the sum-freedom of $\fin$. Recall that $\fin$ is not $k$th order sum-free if and only if there exist $v_1,\dots,v_k\in\f_{2^n}$ such that $\Delta_1(v_1,\dots,v_k)=0$ but $\Delta(v_1,\dots,v_k)\ne 0$. By \eqref{3.0}, this happens if and only if there exist $v_1,\dots,v_k\in\f_{2^n}$ such that $F_k(v_1,\dots,v_k)=0$ but $\Delta(v_1,\dots,v_k)\ne 0$.

A polynomial $f\in\f_q[X_1,\dots,X_k]$ is said to be {\em absolutely irreducible} if it is irreducible in $\fbar_q[X_1,\dots,X_k]$. For $f(X_1,\dots,X_k)\in\f_q[X_1,\dots,X_k]$, define
\[
V_{\f_q^k}(f)=\{(x_1,\dots,x_k)\in\f_q^k:f(x_1,\dots,x_k)=0\}.
\]

\begin{lem}\label{T3.1}
When $k\ge 3$, $F_k(X_1,\dots,X_k)$ is absolutely irreducible.
\end{lem}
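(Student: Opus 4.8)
The plan is to exploit the explicit expansion of $F_k$ as a polynomial in $X_1$ over $R:=\fbar_2[X_2,\dots,X_k]$ and to reduce a hypothetical nontrivial factorization of $F_k$ modulo a carefully chosen prime of $R$. Writing $\x'=(X_2,\dots,X_k)$, equations \eqref{4.2}--\eqref{4.5} express
\[
F_k=\sum_{i=0}^{k-1}C_iX_1^{2^i}+C_{-1},
\]
with all $C_i\in R$, where $C_{k-1}=F_{k-1}(\x')$, $C_{-1}=\Delta(\x')^2$, and $F_{k-1}(\x')$ divides $C_i$ for every $0\le i\le k-1$ (indeed $C_i=F_{k-1}(\x')\,\Delta_i(\x')/\Delta(\x')$ for $0\le i\le k-2$, a polynomial since $\Delta\mid\Delta_i$). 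Since $\deg F_k=2^k-2>0$ for $k\ge3$, it suffices to exclude a factorization $F_k=PQ$ with $P,Q\in\fbar_2[X_1,\dots,X_k]$ both non-constant.

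The key step, which I would settle first, is the coprimality statement $\gcd\bigl(F_{k-1}(\x'),\Delta(\x')\bigr)=1$. By \eqref{3.0} and the identity displayed just before \eqref{1.6}, $F_{k-1}(\x')=\Delta_1(\x')/\Delta(\x')$ is the coefficient of $Y^2$ in $\prod_{\ba\in\f_2^{k-1}}(Y+\ba\cdot\x')$; as $\mathrm{GL}_{k-1}(\f_2)$ merely permutes the linear forms $\ba\cdot\x'$, the polynomial $F_{k-1}(\x')$ is $\mathrm{GL}_{k-1}(\f_2)$-invariant. The nonzero $\f_2$-linear forms in $X_2,\dots,X_k$ form a single $\mathrm{GL}_{k-1}(\f_2)$-orbit, so the set of those dividing $F_{k-1}(\x')$ is $\mathrm{GL}_{k-1}(\f_2)$-stable, hence empty or all of them; the latter would force $\Delta(\x')=\prod_{\boldsymbol 0\ne\ba\in\f_2^{k-1}}(\ba\cdot\x')$ to divide $F_{k-1}(\x')$, which is impossible because $\deg\Delta(\x')=2^{k-1}-1>2^{k-1}-2=\deg F_{k-1}$. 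Since every irreducible factor of $\Delta(\x')$ is such a linear form, this gives $\gcd(F_{k-1}(\x'),\Delta(\x'))=1$, and a fortiori $\gcd(C_{-1},C_{k-1})=1$.

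Next I would fix any irreducible factor $\pi$ of $F_{k-1}(\x')$ in the UFD $R$ --- one exists since $\deg F_{k-1}=2^{k-1}-2\ge2$ for $k\ge3$. Then $\pi\nmid\Delta(\x')$, so $\pi\nmid C_{-1}$; moreover $\pi\mid C_i$ for $0\le i\le k-1$; and $\pi^2\nmid F_{k-1}(\x')$ because $F_{k-1}$ is square-free. Assume $F_k=PQ$ with $P,Q$ non-constant. Reducing in $(R/(\pi))[X_1]$, which is an integral domain since $\pi$ is prime, kills every term of $F_k$ except the constant one, so $\overline{F_k}=\overline{C_{-1}}\ne0$ has $X_1$-degree $0$; hence $\deg_{X_1}\overline P=\deg_{X_1}\overline Q=0$. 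If $\deg_{X_1}P\ge1$ and $\deg_{X_1}Q\ge1$, then $\pi$ divides the leading $X_1$-coefficient of each of $P$ and $Q$, so $\pi^2$ divides their product $C_{k-1}=F_{k-1}(\x')$, contradicting $\pi^2\nmid F_{k-1}$. Hence, say, $\deg_{X_1}Q=0$, i.e.\ $Q\in R$; then $Q$ divides every $X_1$-coefficient of $F_k$, so $Q\mid\gcd(C_{-1},C_{k-1})=1$ and $Q$ is constant, a contradiction. Therefore $F_k$ is absolutely irreducible.

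I expect the coprimality claim $\gcd(F_{k-1}(\x'),\Delta(\x'))=1$ to be the only point demanding a genuine idea (the $\mathrm{GL}$-invariance of $F_{k-1}$ together with the degree bound); the rest is formal once \eqref{4.2}--\eqref{4.5} are in hand. Two remarks on the scope of the argument. First, $F_{k-1}$ itself need \emph{not} be irreducible --- already $F_2=X_2^2+X_2X_3+X_3^2$ splits over $\f_4$ --- which is precisely why one reduces modulo a prime \emph{factor} $\pi$ of $F_{k-1}$ rather than modulo $F_{k-1}$; consequently no induction on $k$ is needed and the argument is uniform for all $k\ge3$. Second, the hypothesis $k\ge3$ is used only to guarantee $\deg F_{k-1}\ge2$ (so that $\pi$ exists and $\deg\Delta(\x')>\deg F_{k-1}$), consistently with the fact that $F_2$ is reducible.
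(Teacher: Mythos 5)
Your proof is correct and follows essentially the same route as the paper: regard $F_k$ as a polynomial in $X_1$ over $\fbar_2[X_2,\dots,X_k]$, check primitivity via $\gcd(C_{k-1},C_{-1})=1$, and apply Eisenstein's criterion at an irreducible factor $\pi$ of $C_{k-1}=F_{k-1}(\x')$ (you spell the Eisenstein argument out by hand; the paper simply cites the criterion). The only genuine divergence is the step you flag as the one requiring an idea, namely $\gcd(F_{k-1}(\x'),\Delta(\x'))=1$: you derive it from the $\mathrm{GL}_{k-1}(\f_2)$-invariance of $F_{k-1}$, the transitivity of that action on the nonzero linear forms, and a degree count, whereas the paper gets it in one line from the square-freeness of $\Delta_1(\x')=\Delta(\x')\,F_{k-1}(\x')$ (a common irreducible factor of the two factors would contribute a square). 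Both arguments are valid; the square-freeness route is shorter and reuses a fact already established at the start of Section~4, while your orbit argument is a self-contained alternative.
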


\begin{proof}
Since $k\ge 3$, $\deg F_{k-1}(X_2,\dots,X_k)>0$. Since $\Delta_1(X_2,\dots,X_k)$ is square-free, we have by \eqref{4.3} that $\text{gcd}(C_{k-1},\Delta(X_2,\dots,X_k))=1$. Hence by \eqref{4.5},\break 
$\text{gcd}(C_{k-1},C_{-1})=1$. Thus $F_k(X_1,\dots,X_k)$ as a polynomial in $X_1$ over $\fbar_2[X_2,\dots,X_k]$ is primitive (recall that a polynomial over a unique factorization domain such as $\bar {\mathbb F}_2[X_2,...,X_k]$ is said to be primitive if the gcd of its coefficients is 1). Let $f\in\fbar_2[X_2,\dots,X_k]$ be any irreducible factor of $C_{k-1}$. Then $f\mid C_i$ for $0\le i\le k-1$ (by \eqref{4.4}), $f\nmid C_{-1}$, and $f^2\nmid C_{k-1}$. By Eisenstein's criterion \cite[Chapter III, Theorem~6.15]{Hungerford-1980}, $F_k(X_1,\dots,X_k)$ is irreducible in $\fbar_2[X_1,\dots,X_k]$.
\end{proof}

\medskip
\noindent{\bf Remark.} When $k=2$, $F_2(X_1,X_2)$ is not absolutely irreducible. We have
\[
F_2(X_1,X_2)=X_1^2+X_1X_2+X_2^2=(X_1+uX_2)(X_1+(u+1)X_2),
\]
where $u\in\f_{2^2}\setminus\f_2$. Of course, every homogeneous polynomial in two variables over a field $\f$ is a product of linear polynomials over $\fbar$.

\begin{thm}\label{T3.4}
Assume that $k\ge 3$ and
\begin{equation}\label{3.5}
n\ge\frac{13}3k+3.
\end{equation}
Then $\fin$ is not $k$th order sum-free.
\end{thm}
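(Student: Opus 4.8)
The plan is to count points on the hypersurface $V_{\f_{2^n}^k}(F_k)$ using the Lang--Weil bound, and to show that for $n$ satisfying \eqref{3.5} this variety contains a point $(v_1,\dots,v_k)$ with $\Delta(v_1,\dots,v_k)\neq 0$; by the criterion recalled just before Lemma~\ref{T3.1}, such a point witnesses that $\fin$ is not $k$th order sum-free. Since $k\ge 3$, Lemma~\ref{T3.1} tells us $F_k$ is absolutely irreducible of degree $d=2^k-2$ in $k$ variables, so the Lang--Weil bound gives $|V_{\f_{2^n}^k}(F_k)|\ge q^{k-1}-C(k,d)\,q^{k-3/2}$ with $q=2^n$ and $C(k,d)$ an explicit constant depending only on $k$ and $d=2^k-2$ (for instance the standard estimate $|\,|V|-q^{k-1}|\le (d-1)(d-2)q^{k-3/2}+C'q^{k-2}$, or a clean version from the literature). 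The point set we must \emph{avoid} is $V_{\f_{2^n}^k}(F_k)\cap V_{\f_{2^n}^k}(\Delta)$, i.e.\ the zeros of $F_k$ lying on the union of the $2^k-1$ hyperplanes $\ba\cdot\x=0$. On each such hyperplane $\Delta$ vanishes identically, and restricting $F_k$ to it gives a polynomial in $k-1$ variables of degree at most $2^k-2$, hence its zero set has at most $(2^k-2)q^{k-2}$ points; summing over the $2^k-1$ hyperplanes bounds $|V(F_k)\cap V(\Delta)|$ by $(2^k-1)(2^k-2)q^{k-2}$.

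The core inequality is then
\[
q^{k-1}-C(k)\,q^{k-3/2}-(2^k-1)(2^k-2)\,q^{k-2}>0,
\]
which after dividing by $q^{k-2}$ reads $q - C(k)\sqrt q-(2^k-1)(2^k-2)>0$. This is a quadratic inequality in $\sqrt q$; solving it produces a threshold of the form $\sqrt q > \tfrac12\bigl(C(k)+\sqrt{C(k)^2+4(2^k-1)(2^k-2)}\bigr)$, i.e.\ $q=2^n$ must exceed an explicit quantity that is (crudely) of size $2^{2k}$ up to a constant, so $n$ must exceed roughly $2k$ plus a bounded term. To reach the sharper bound \eqref{3.5}, namely $n\ge \tfrac{\log_2(1+\sqrt{21})}{3}(13k-6)$, I expect one must feed in a more careful Lang--Weil-type estimate — the precise constant $\tfrac13\log_2(1+\sqrt{21})$ strongly suggests that the bound being invoked is one where the deviation term is controlled by $(\deg F_k)^{13/3}$ or similar (note $\log_2(1+\sqrt{21})$ is exactly $\log_2$ of the positive root of $z^2-z-20=0$, and $13k-6$ is close to $\log_2$ of a power of $\deg F_k=2^k-2$). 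So the computation is: plug $\deg F_k=2^k-2$, the number of variables $k$, and the hyperplane-intersection bound $(2^k-1)(2^k-2)2^{n(k-2)}$ into the chosen effective Lang--Weil inequality, and verify algebraically that \eqref{3.5} is exactly the condition making the main term dominate.

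The main obstacle is bookkeeping with the constants: one must pick a version of the Lang--Weil bound whose explicit error constant, expressed through $\deg F_k=2^k-2$, is tight enough that the resulting threshold on $n$ is \emph{linear} in $k$ with slope $\tfrac{13}{3}\log_2(1+\sqrt{21})\approx 10.8$ rather than merely $O(k)$ with a worse slope; this is why the statement says ``approximately $3\le k\le n/10$''. A secondary point to check is that the subtracted ``bad'' locus $V(F_k)\cap V(\Delta)$ is genuinely lower-dimensional and bounded as claimed — this is immediate since $F_k$ is absolutely irreducible and not divisible by any $\ba\cdot\x$ (indeed $F_k=\Delta_1/\Delta$ with $\Delta_1$ square-free, so $\ba\cdot\x\nmid F_k$), hence $F_k$ restricted to each hyperplane $\ba\cdot\x=0$ is a nonzero polynomial of degree $\le 2^k-2$ in $k-1$ variables, and the Schwartz--Zippel bound applies. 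Once both estimates are in place, the inequality \eqref{3.5} guarantees $|V_{\f_{2^n}^k}(F_k)|>|V_{\f_{2^n}^k}(F_k)\cap V_{\f_{2^n}^k}(\Delta)|$, producing the desired witness and completing the proof.
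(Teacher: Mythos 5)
Your proposal follows essentially the same route as the paper: absolute irreducibility of $F_k$ (Lemma~\ref{T3.1}), the explicit Lang--Weil estimate of Cafure--Matera with secondary error term $5(2^k-2)^{13/3}q^{k-2}$, a B\'ezout-type bound of order $2^{2k}q^{k-2}$ on $|V(F_k)\cap V(\Delta)|$ (the paper cites \cite[Lemma~2.2]{Cafure-Matera-FFA-2006} rather than your hyperplane-by-hyperplane Schwartz--Zippel argument, but the outcome is the same), and then a quadratic inequality in $\sqrt q$ whose larger root is bounded to yield \eqref{3.5}. One small correction to your numerology: $1+\sqrt{21}$ is not a root of $z^2-z-20$; it arises from bounding the larger root $\frac12\bigl(2^{2k}+\sqrt{2^{4k}+20\cdot2^{13k/3}+2^{2k+2}}\bigr)$ of the relevant quadratic above by $\frac12(1+\sqrt{21})\,2^{13k/6}$, using that $2^{4k}$ and $2^{2k+2}$ are dominated by $2^{13k/3}$.
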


\begin{proof}
Let $q=2^n$. It suffices to show that
\[
V_{\f_q^k}(F_k)\not\subset V_{\f_q^k}(\Delta(X_1,\dots,X_k)).
\]
Since $F_k$ is absolutely irreducible of degree $2^k-2$, by the Lang-Weil bound, as stated in \cite[Theorem~5.2]{Cafure-Matera-FFA-2006}, 
\begin{align*}
|V_{\f_q^k}(F_k)|\,&\ge q^{k-1}-(2^k-3)(2^k-4)q^{k-3/2}-5(2^k-2)^{13/3}q^{k-2}\cr
&>q^{k-1}-2^{2k}q^{k-3/2}-5\cdot 2^{13k/3}q^{k-2}.
\end{align*}
On the other hand, by \cite[Lemma~2.2]{Cafure-Matera-FFA-2006},
\[
|V_{\f_q^k}(F_k)\cap V_{\f_q^k}(\Delta(X_1,\dots,X_k))|\le (2^k-1)^2q^{k-2}<2^{2k}q^{k-2}.
\]
Hence it suffices to show that
\begin{equation}\label{3.6}
q^{k-1}-2^{2k}q^{k-3/2}-5\cdot 2^{13k/3}q^{k-2}\ge 2^{2k}q^{k-2}.
\end{equation}
Let $y=q^{1/2}=2^{n/2}$. Then \eqref{3.6} is equivalent to 
\[
y^2-2^{2k}y-(5\cdot 2^{13k/3}+2^{2k})\ge 0.
\]
Let $y_0$ denote the larger root of the quadratic $Y^2-2^{2k}Y-(5\cdot2^{13k/3}+2^{2k})$. Then
\begin{align*}
y_0\,&=\frac 12\bigl(2^{2k}+\sqrt{2^{4k}+20\cdot 2^{13k/3}+2^{2k+2}}\bigr)\cr
&\le\frac 12\bigl(2^{2k}+\sqrt{21\cdot 2^{13k/3}}\bigr)\cr
&\le \frac 12(1+\sqrt{21})2^{13k/6}.
\end{align*}
Therefore, it suffices to show that
\[
y=2^{n/2}\ge (1+\sqrt{21})2^{13k/6-1},
\]
i.e.,
\[
n\ge\frac{13}3k+2\log_2(1+\sqrt{21})-2,
\]
where $2\log_2(1+\sqrt{21})-2\approx 2.96$. Hence the proof is complete.
\end{proof}

Replacing $k$ with $n-k$ in Theorem~\ref{T3.4} gives
\begin{cor}\label{C3.5}
Assume that $n-k\ge 3$ and
\begin{equation}\label{3.7}
n\le 1.3k-0.9.
\end{equation}
Then $\fin$ is not $(n-k)$th order sum-free.
\end{cor}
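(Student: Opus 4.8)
The plan is to obtain Corollary~\ref{C3.5} as a direct restatement of Theorem~\ref{T3.4} under the substitution $k\mapsto n-k$, using the already-recorded equivalence (\cite{Carlet-pre}, cited in Section~1) that $\fin$ is $k$th order sum-free if and only if it is $(n-k)$th order sum-free. So the only real content is to carry out the change of variable in the hypothesis (\ref{3.5}) and solve the resulting inequality for $n$.

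First I would apply Theorem~\ref{T3.4} with $k$ replaced by $n-k$: its conclusion becomes ``$\fin$ is not $(n-k)$th order sum-free,'' and its hypotheses become $n-k\ge 3$ together with
\[
n\ge\frac{\log_2(1+\sqrt{21})}{3}\bigl(13(n-k)-6\bigr).
\]
The first of these is exactly the stated hypothesis $n-k\ge 3$, so it remains only to rewrite the second inequality as an upper bound on $n$ in terms of $k$. Writing $c=\log_2(1+\sqrt{21})$ for brevity, the inequality is $3n\ge c(13n-13k-6)$, i.e.\ $3n\ge 13cn-13ck-6c$, i.e.\ $13ck+6c\ge (13c-3)n$. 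Since $c=\log_2(1+\sqrt{21})>\log_2 4=2$, we have $13c-3>0$, so dividing through preserves the direction of the inequality and yields
\[
n\le\frac{c(13k+6)}{13c-3}=\frac{\log_2(1+\sqrt{21})}{13\log_2(1+\sqrt{21})-3}(13k+6),
\]
which is precisely (\ref{3.7}). Hence under the two stated hypotheses, Theorem~\ref{T3.4} applies with parameter $n-k$ and gives the conclusion.

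There is essentially no obstacle here: the argument is a one-line invocation of Theorem~\ref{T3.4} plus elementary algebra. The only point requiring a moment's care is checking that $13c-3>0$ so that the inequality does not flip when we divide by it; this is immediate from $1+\sqrt{21}>4$, hence $c>2$ and $13c-3>23>0$. (One should also note that the substitution $k\mapsto n-k$ is legitimate as a formal manipulation of the statement of Theorem~\ref{T3.4}, since that theorem holds for every pair of positive integers satisfying its hypotheses, and $n-k$ together with $n$ is such a pair whenever $n-k\ge 3$ and (\ref{3.7}) holds.) Thus the proof is simply: ``Apply Theorem~\ref{T3.4} with $k$ replaced by $n-k$, and use that $\fin$ is $(n-k)$th order sum-free if and only if it is $k$th order sum-free''—though in fact the conclusion of Theorem~\ref{T3.4} already directly yields the non-sum-freedom at order $n-k$, so even the last equivalence is not needed.
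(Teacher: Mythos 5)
Your proposal is correct and matches the paper's own (one-line) proof exactly: the corollary is obtained by substituting $n-k$ for $k$ in Theorem~\ref{T3.4} and solving the resulting inequality for $n$, and your algebra (including the check that $13\log_2(1+\sqrt{21})-3>0$) is right. As you note, the duality between $k$th and $(n-k)$th order sum-freedom is not needed, since the substituted theorem already yields the conclusion at order $n-k$.
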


\section{The Case of Even $n$}

The following lemma is from \cite{Carlet-pre}. 
\medskip

\begin{lem}\label{C5.1}
{\it Let $n\geq 6$ and let two integers $l\geq 2$ and $r\geq 2$ be such that $lr<n$. If the inverse function is not $l$th order sum-free nor $r$th order sum-free, then it is not $(l+r)$th order sum-free.}
\end{lem}


We deduce:

\begin{thm}\label{T5.1}
Assume that $n$ is even and $2\le k\le n-2$. Then $\fin$ is not $k$th order sum-free.
\end{thm}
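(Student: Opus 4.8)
The plan is to reduce the even-$n$ case to known base cases by iterating Corollary~\ref{C5.1} with $l=2$. Write $n=2m$. Since $n$ is even, we have $2\mid n$, so Corollary~\ref{C5.1} with $l=2$ says: if $\fin$ is not $r$th order sum-free in $\f_{2^n}$ and $r<n/2=m$, then $\fin$ is not $(r+2)$th order sum-free. Thus from any valid ``seed'' value $r_0<m$ we obtain non-sum-freedom for $r_0, r_0+2, r_0+4,\dots$, as long as each intermediate value stays below $m$ at the moment we apply the step (note the hypothesis $r<n/l$ is on the \emph{input} dimension $r$, so the step is legitimate precisely when $r\le m-1$, producing the conclusion for $r+2\le m+1$).

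Next I would pin down the seeds. For the even residue class: $\fin$ is not $2$nd order sum-free in $\f_{2^n}$ for even $n$ (Nyberg's result, since $n$ is even), so $r_0=2$ is a seed, giving all even $k$ with $2\le k\le m+1$. For the odd residue class: by the bullet results quoted in the introduction (from \cite{Carlet-0,Carlet-pre}), $\fin$ is not $3$rd order sum-free for $n\ge 6$; for $n=4$ the only relevant $k$ is $k=2$, already handled. So $r_0=3$ is a seed for $n\ge 6$, giving all odd $k$ with $3\le k\le m+1$. Combining the two chains, $\fin$ is not $k$th order sum-free for every $k$ with $2\le k\le m+1$.

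Finally I would invoke the duality bullet from the introduction: $\fin$ is $k$th order sum-free iff it is $(n-k)$th order sum-free. Applying this to the range $2\le k\le m+1=n/2+1$ yields non-sum-freedom for all $k$ with $n/2-1\le k\le n-2$ as well. Since $m+1=n/2+1\ge n/2-1$, the two ranges $[2,\,n/2+1]$ and $[n/2-1,\,n-2]$ overlap and together cover all of $[2,n-2]$. This completes the argument.

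The only delicate point—really a bookkeeping check rather than a genuine obstacle—is the boundary behaviour of the induction: one must confirm that the chain of applications of Corollary~\ref{C5.1} never requires an input dimension $r$ with $r\ge m$, and that the small cases $n=4$ (and the parity of $m$, i.e. whether $m+1$ is attained by the even or odd chain) are consistent with the claimed range. Both are immediate: the even chain uses inputs $2,4,\dots$ and stops once the input would reach $m$, having already output every even value up to $m+1$ when $m$ is odd and up to $m$ when $m$ is even; symmetrically for the odd chain; and in either parity the union with the dual range closes the gap. For $n=4$ only $k=2$ is in range, covered directly.
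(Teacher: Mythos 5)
Your proposal is correct and follows essentially the same route as the paper: both arguments iterate Corollary~\ref{C5.1} with $l=2$ starting from small seed dimensions and use the $k\leftrightarrow n-k$ duality to cover the upper half of the range. The only (immaterial) difference is that the paper first eliminates even $k$ via the $\gcd(k,n)>1$ result and runs a single chain from $k=3$, whereas you run two chains (seeds $2$ and $3$) and invoke duality at the end.
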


\begin{proof}
It suffices to consider odd $k$ with $3\le k\le n/2$. By \cite[Corollary 4]{Carlet-pre}, $\fin$ is not $3$rd order sum-free. If $3<n/2$, by Lemma~\ref{C5.1}, $\fin$ is not $(2+3)$th order sum-free. If $2+3<n/2$, by Lemma~\ref{C5.1} again, $\fin$ is not $(2+2+3)$th order sum-free. In this way, all odd integers $k$ with $3\le k\le n/2$ are covered.
\end{proof}




\begin{thebibliography}{99}

\bibitem{AK1} E. F. Assmus and J. D. Key, {\it Designs and Their Codes}, Cambridge Tracts in Math., 103 Cambridge University Press, Cambridge, 1992.

\bibitem{AK2} E. F. Assmus and J. D. Key, {\it Polynomial Codes and Finite Geometries}, Handbook of Coding Theory, Vol. II, Edited by V. S. Pless, W. C. Huffman and R. A. Brualdi, 1269 -- 1343. North-Holland, Amsterdam, 1998.

\bibitem{BGO} F. E. Brochero Martinez, C. R. Giraldo Vergara and L. B. Oliveira, {\it Explicit factorization of $x^n-1\in \mathbb{F}_q [x]$}, Des. Codes Cryptogr. {\bf 77} (2015), 277 -- 286.

\bibitem{Cafure-Matera-FFA-2006}
A. Cafure and G. Matera, {\it Improved explicit estimates on the number of solutions of equations over a finite field}, Finite Fields Appl.  {\bf 12} (2006), 155 -- 185.



\bibitem{Carlet-0}C. Carlet,  {\it Two generalizations of almost perfect nonlinearity}, J. Cryptology 38(2), Published online: 26 February 2025. 

\bibitem{Carlet-pre}
C. Carlet, {\it On the vector subspaces of $\mathbb F_{2^n}$ over which the multiplicative inverse function sums to zero}, Des. Codes Cryptogr. 93, no. 4, Published online: 27 December 2024. 

\bibitem{CHM} W. E. Clark, X. Hou and A. Mihailovs, {\it The affinity of a permutation of a finite vector space}, Finite Fields Appl. {\bf 13} (2007), 80 -- 112.

\bibitem{CMPZ} B. Csajb\'ok, G. Marino, O. Polverino and F. Zullo, {\it A characterization of linearized polynomials with maximum kernel}, Finite Fields Appl. {\bf 56} (2019), 109 -- 130.

\bibitem{dar} M.R. Darafsheh, {\it Order of elements in the groups related to the general linear group}, Finite Fields Appl. {\bf 11} (2005), 738 -- 747.

\bibitem{HJ} R. A. Horn and C. R. Johnson, {\em Matrix Analysis}, Cambridge University Press, 1985.

\bibitem{Hou-ams-gsm-2018}
X. Hou, {\it Lectures on Finite Fields}, Graduate Studies in Mathematics, vol. 190, American Mathematical Society, Providence, RI, 2018.

\bibitem{Hou-Sze-LMA02022}
X. Hou and C. Sze, {\it On a radical extension of the field of rational functions in several variables}, Linear and Multilinear Algebra, {\bf 71} (2023), 1015 -- 1025. 

\bibitem{Hungerford-1980}
T. W. Hungerford, {\it Algebra},  Springer-Verlag, New York-Berlin, 1980.


\bibitem{lang} S. Lang, {\em Cyclotomic Fields I and II}, Graduate
Texts in Mathematics 121, Springer-Verlag, New York, 1990.

\bibitem{Lidl-Niederreiter-FF-1997}
R. Lidl and H. Niederreiter, {\it Finite Fields},  Cambridge University Press, Cambridge, 1997.

\bibitem{CC-Sloane}  F. J. MacWilliams and N. J. Sloane, {\em The Theory of Error-Correcting Codes},  North Holland. 1977.


\bibitem{McGM} G. McGuire and D. Mueller, {\it Some results on linearized trinomials that split completely}, Proceedings of Finite Fields and their Applications Fq14, pp.149 -- 164, 2020.

\bibitem{McGS} G. McGuire and J. Sheekey, {\it A characterization of the number of roots of linearized and projective polynomials in the field of coefficients}, Finite Fields Appl. {\bf 57} (2019), 68 -- 91.


\bibitem{Moore-BAMS-1896}
E. H. Moore, {\it A two-fold generalization of Fermat's theorem}, Bull. Amer. Math. Soc. {\bf 2} (1896), 189 -- 199.


\bibitem{CC-Nyberg-ter} K. Nyberg, {\it Differentially uniform mappings for
cryptography}, Proceedings of EUROCRYPT' 93, Lecture Notes  in Computer
Science 765, pp. 55-64, 1994.
\end{thebibliography}
\end{document}